\newtheorem{theorem}{Theorem}
\newtheorem{lemma}{Lemma}
\newtheorem{cor}{Corollary}
\theoremstyle{definition}
\newtheorem{exam}[theorem]{Example}
\def\min{\mathop{\mathrm{min}}}
\newcommand{\C}{\mathbb{C}}
\newcommand{\N}{\mathbb{N}}
\newcommand{\PP}{\mathbb{P}}
\newcommand{\Z}{\mathbb{Z}}
\newcommand{\R}{\mathbb{R}}
\newcommand{\ax}{\rightarrow }
\begin{document}
\title{{On Cartan's second main theorems for holomorphic curves on $M-$ punctured complex planes }}

\author{Nguyen Van Thin}
\address{Department of Mathematics, Thai Nguyen University of Education, Luong Ngoc Quyen Street, Thai Nguyen city, Viet Nam.}
\email{thinmath@gmail.com}

\thanks{2010 {\it Mathematics Subject Classification.} Primary 32H30.}
\thanks{Key words: Fundamental theorem, holomorphic curve, $M$ - punctured complex planes, Nevanlinna theory.}

\begin{abstract}

In this paper, we give some extension of fundamental theorems in Nevanlinna - Cartan theory for holomorphic curve on $M-$
 punctured complex planes. Detail, we prove some fundamental theorems for holomorphic curves on $M$ - punctured complex
 plane $\Omega=\mathbb C\setminus \{c_1, \dots, c_M\}$ intersecting a finite set of fixed hyperplanes in $\PP^n(\C)$  and fixed
 hypersurfaces in general position on complex 
projective variety with  the level of truncation, where $M\ge 2$ is an integer number and $c_1, \dots, c_M$ are distinct complex 
numbers. Note that in here, the holomorphic curves may be contained the essential singularity points on complex plane
 (at $c_j, j=1, \dots, M$)  which have not been considered before in other references according to my understanding. 
As an application, we establish a result for uniqueness problem of holomorphic curve by inverse image of a hypersurface, 
 it is improvement of some results  before \cite{DR, P} in this trend.
\end{abstract}
\baselineskip=16truept 
\maketitle 
\pagestyle{myheadings}
\markboth{}{}

\section{ Introduction and main results}

One of the main topic in studying the meromorphic functions is the Nevanlinna theory.  In 1933, H. Cartan \cite{CA} generalized the Nevanlinna theory for holomorphic curve in projective space which is now called the Nevanlinna - Cartan theory.
 Since that time, this problem has been studied intensively by many authors. Nevanlinna - Cartan theory has found various applications
in complex analysis and geometry complex such as uniqueness set theory, normal family theory, problem extension of holomorphic mapping and the property hyperbolic of algebraic variety.

In 2004, M. Ru \cite{Ru1} proved the second main theorem for hypersurface the counterpart of the result of Corvaja and Zannier
 \cite{PU} in approximation diophantine. In 2009, M. Ru \cite{Ru3} proved the second main theorem for holomorphic curves
from $\C$ into Algebraic variety on projective space. In 2010, Z. Chen, M. Ru and  Q. Yan \cite{Ru4} gave an improvement of Ru's result \cite{Ru3} with the level of truncation.  For meromorphic functions on Annuli in complex plane 
$\mathbb C$, in 2005, A. Y. Khrystiyanyn and A. A. Kondratyuk \cite{KK, KK1} showed some problems for the second main 
theorem and defect relation. In 2007, M. O. Hanyak and A. A. Kondratyuk \cite{KH} generalized the second main theorem of 
A. Y. Khrystiyanyn and A. A. Kondratyuk for $M-$ punctured complex planes. In 2015, H. T. Phuong and N. V. Thin \cite{TP}
have been generalized the results of  A. Y. Khrystiyanyn and A. A. Kondratyuk for holomorphic curves on annuli intersecting a 
finite set of fixed hyperplanes in general position in $\PP^n(\C)$ with ramification. In this paper, we will prove some fundamental theorems for holomorphic mappings from $M$ - punctured complex planes
  to $\mathbb P^n(\mathbb C)$ intersecting a finite set of hyperplanes which is an extension the result of Phuong and Thin \cite{TP}.
Futhermore, we extend the second main theorem of M. Ru \cite{Ru3} for holomorphic curves from $M$ - punctured complex 
planes into Algebraic variety in $\PP^n(\C)$ intersecting a finite set of hypersurfaces in general position.
 %To state our results, we first introduce some notations.

First we remind some definitions in \cite{KH}. Let $c_j \in \C$, $j\in \{1, \dots, M\}$ be the set of distinct points, where $M\ge 
2$ is a positive integer. Then, $\Omega = \C \setminus \cup_{j=1}^{M}\{c_j\}$ is called the $M-$ punctured planes. Denotes $d=\dfrac{1}{2}\min\{|c_j-c_k|: j\ne k\}$
 and $r_0=1/d+\max\{|c_j|: 1\le j\le M\}.$ It is easy to see that $1/{r_0}<d, \overline D_{1/r_0}(c_j)\cap \overline D_{1/r_0}(c_k)=
\emptyset,$ 
$j\ne k,$ and $\overline D_{1/r_0}(c_j)\subset D_{r_0}(0)$, $j\in \{1, 2, \dots, M\},$ where $D_r(c)$ denotes a disk of radius $r>0$ 
centered at $c.$ For an arbitrary $t\ge r_0,$ we define
$$ \Omega_{t} = D_{t}(0) \setminus \cup_{j=1}^{M} \overline D_{1/t}(c_j).$$
Using the above notation, we conclude that $\Omega_{r_0} \subset \Omega_r, r_0<r \le +\infty.$

Let $ f=(f_{0}:\dots:f_{n}) : \Omega \ax \PP^n(\C)$  be a holomorphic map where $f_{0},\dots,f_{n}$ are holomorphic functions
 and  without common zeros in $\Omega$. For $r_0< r < +\infty$, the characteristic function $T_{f}(r)$ of $f$ is defined by 
$$T_{f}(r) =\dfrac{1}{2\pi}\int_{0}^{2\pi} \log \|f(re^{i\theta})\|d\theta  +\dfrac{1}{2\pi}\sum_{j=1}^{M}\int_{0}^{2\pi} 
\log \|f(c_j+\dfrac{1}{r}e^{i\theta})\|d\theta,$$
where $\|f(z)\|=\max\{ |f_0(z)|,\dots ,|f_n(z)| \}$. The above definition is independent, up to an additive constant, of the choice of the reduced representation of $f$. Futhermore, when holomorphic curve $f$ is holomorphic at $c_j, j=1, \dots, M,$ we have
$$T_{f}(r) =\dfrac{1}{2\pi}\int_{0}^{2\pi} \log \|f(re^{i\theta})\|d\theta +O(1).$$
Therefore, $T_f(r)$ is Nevanlinna-Cartan characteristic function  of holomorphic curve $f$ on $\mathbb C.$ Thus, our definition is an extension the definition of characteristic function for holomorphic curve on $\mathbb C$ to  $\Omega=\mathbb C\setminus \{c_1, \dots, c_M\}.$ We add the quantity
$$\dfrac{1}{2\pi}\sum_{j=1}^{M}\int_{0}^{2\pi} 
\log \|f(c_j+\dfrac{1}{r}e^{i\theta})\|d\theta$$
in the original definition of $T_f(r)$ to control the growth of f in the  neighborhood of the essential singularity points.

Let $H$ be a hyperplane in $\PP^n(\C)$ and 
$$L(z_0,\dots,z_n) = \sum_{j=0}^{n}a_{j}z_{j}$$
be linear form defined $H$, where $a_{j}\in \C, \ j=0,\dots,n,$ are constants. Denote $a=(a_{0},\dots,a_{n})$ by the non-zero associated vector with $H$. And denote 
$$(H,f)=(a,f) = \sum_{j=0}^{n}a_{j}f_{j}.$$
Under the assumption that $(a,f) \not\equiv\ 0$, for $r_0< r < +\infty$, the proximity function  of $f$ with respect to $H$ is defined as
$$m_{f}(r,H)=\dfrac{1}{2\pi}\int_{0}^{2\pi} \log \dfrac {\|f(re^{i\theta})\|}{|(a,f)(re^{i\theta})|} d\theta+
\dfrac{1}{2\pi}\sum_{j=1}^{M}\int_{0}^{2\pi} \log \dfrac {\|f(c_j+\dfrac{1}{r}e^{i\theta})\|}{|(a,f)(c_j+\dfrac{1}{r}e^{i\theta})|} d\theta,$$
where the above definition is independent, up to an additive constant, of the choice of the reduced representation of $f$.

We denote $n_{f}(r,H)$ by the number of zeros of $(a,f)$ in $\overline \Omega_r.$  The counting function of $f$ is defined by
$$N_{f}(r,H)=\int_{r_0}^r \dfrac{n_{f}(t,H)}{t} dt.$$
Now let $\delta$ be a positive integer, we denote $n_{f}^{\delta}(r,H)$ by the numbers of zeros of $(a,f)$ in $\overline \Omega_{r},$ 
where any zero of multiplicity greater than $\delta$ is ``truncated" and counted as if it only had multiplicity $\delta$.  The truncated 
counting function of $f$ is defined by
$$N^\delta_{f}(r,H)=\int_{r_0}^r \dfrac{n^\delta_{f}(t,H)}{t} dt.$$

Recall that hyperplanes $H_1,\dots,H_q, \ q > n$, in $\PP^n(\C)$ are said to be in general position if for any distinct $i_1,\dots,i_{n+1} \in \{1,\dots,q\}$,
$$\bigcap_{k=1}^{n+1}\text{supp}(H_{i_k}) = \emptyset,$$
this is equivalence to the $H_{i_1},\dots, H_{i_{n+1}}$ being linearly independent.

In the case of hypersurface, we may define the proximity function, counting functions of holomorphic curve $f$ similarly.  
Let $D$ be a hypersurface in $\PP^n(\C)$ of degree $d$. Let $Q$ be the homogeneous polynomial of degree $d$ defining $D$. 
Under the assumption that $ Q(f) \not\equiv 0.$ Then, the proximity function $m_f(r, D)$ of $f$ is defined by
$$m_{f}(r,D)=\dfrac{1}{2\pi}\int_{0}^{2\pi} \log \dfrac {\|f(re^{i\theta})\|^{d}}{|Q(f)(re^{i\theta})|} d\theta+
\dfrac{1}{2\pi}\sum_{j=1}^{M}\int_{0}^{2\pi} \log \dfrac {\|f(c_j+\dfrac{1}{r}e^{i\theta})\|^{d}}{|Q(f)(c_j+\dfrac{1}{r}e^{i\theta})|} d\theta,$$
where the above definition is independent, up to an additive constant, of the choice of the reduced representation of $f$.
The next, we denote $n_{f}(r,D)$ by the number of zeros of $Q(f)$ in  $\overline \Omega_r.$  The counting function 
$N_f(r, D)$  of  $f$ is defined by
$$N_{f}(r,D)=\int_{r_0}^r \dfrac{n_{f}(t,D)}{t} dt.$$
Now let $\delta$ be a positive integer, we denote $n_{f}^{\delta}(r,D)$ by the numbers of zeros of $Q(f)$ in $\overline \Omega_{r},$ 
where any zero of multiplicity greater than $\delta$ is ``truncated" and counted as if it only had multiplicity $\delta$.  The truncated 
counting function of $f$ is defined by
$$N^\delta_{f}(r,D)=\int_{r_0}^r \dfrac{n^\delta_{f}(t,D)}{t} dt.$$

Let $V\subset \PP^N(\C)$ be a smooth complex projective variety of dimension $n\ge 1.$ Let $D_1, \dots, D_q$ be hypersurfaces
in $\PP^N(\C),$ where $q>n.$ The hypersurfaces $D_1, \dots, D_q$ are said to be {\it in general position on V}\; if for every subset
$\{i_0, \dots, i_n\} \subset \{1, \dots, q\},$ we have
$$ V\cap \text{Supp}D_{i_{0}} \cap \dots \cap \text{Supp}D_{i_n}= \emptyset,$$
where $\text{Supp}(D)$ means the support of the divisor $D.$ A map $f: \Omega \to V$ is said to be {\it algebraically nondegenerate}
 if the image of $f$ is not contained in any proper subvarieties of $V.$

In this paper,  a notation $``\|"$ in the inequality  means that the inequality holds for $r\in(r_0,+\infty)$ outside a set  of finite measure.

Our main results are
\begin{theorem}\label{Th1}
 Let $D$ be a hypersurfaces in $\PP^n(\C)$ defining the homogeneous polynomial $Q$ with degree 
$d$ and $f=(f_{0}:\dots:f_{n}) : \Omega \ax \PP^{n}(\C)$ be a holomorphic curve whose image is not contained $D$. Then we have for any $r_0< r <+\infty$,
$$dT_{f}(r)=m_{f}(r,D)+N_{f}(r,D)+O(1),$$
where $O(1)$ is a constant independent of $r$.
\end{theorem}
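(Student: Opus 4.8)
The statement is precisely the First Main Theorem of Nevanlinna--Cartan type on $\Omega$, and I would prove it by reducing everything to a Jensen formula for the single holomorphic function $Q(f)$. First I would unwind the definitions: using $\log\|f\|^d=d\log\|f\|$ inside $m_f(r,D)$ and splitting the logarithm of a quotient, the numerator terms reassemble exactly into $d\,T_f(r)$, so that
$$m_f(r,D)=d\,T_f(r)-\left[\frac{1}{2\pi}\int_0^{2\pi}\log|Q(f)(re^{i\theta})|\,d\theta+\frac{1}{2\pi}\sum_{j=1}^M\int_0^{2\pi}\log\Big|Q(f)\big(c_j+\tfrac1r e^{i\theta}\big)\Big|\,d\theta\right].$$
Hence the theorem is equivalent to the assertion that the bracketed quantity equals $N_f(r,D)+O(1)$.

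Next I would set $g:=Q(f)$. Since $f_0,\dots,f_n$ have no common zero on $\Omega$ and the image of $f$ is not contained in $D$, the function $g$ is holomorphic on $\Omega$ with $g\not\equiv0$, and it is free of poles on $\overline{\Omega}_r$ for every finite $r$: the points $c_j$, where $f$ may carry essential singularities, lie inside the removed disks $\overline D_{1/r}(c_j)$ and are never touched by the circles of integration. By the very definition of $n_f(\cdot,D)$, the zeros of $g$ in $\overline{\Omega}_r$ counted with multiplicity are the intersections of $f$ with $D$, so the zero-counting function $\int_{r_0}^r n_g(t)/t\,dt$ of $g$ equals $N_f(r,D)$.

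The heart of the matter is the Jensen formula on the multiply connected domain $\Omega_r=D_r(0)\setminus\bigcup_{j=1}^M\overline D_{1/r}(c_j)$, whose boundary is the outer circle $|z|=r$ together with the $M$ inner circles $|z-c_j|=1/r$. I would prove it by differentiating the bracketed quantity in $r$. For the outer circle the $r$-derivative of the mean of $\log|g|$ is $\frac{1}{2\pi r}\int_{|z|=r}\partial_n\log|g|\,ds$, with $\partial_n$ the outward normal of $\Omega_r$; for each inner circle the chain rule through $\rho=1/r$ produces a factor $-1/r^2$, which combines with the fact that the direction of increasing $\rho$ is the \emph{inward} normal of $\Omega_r$ to flip the sign twice, so that the inner contribution enters with the \emph{same} outward-normal orientation. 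Summing, the $r$-derivative of the bracket equals $\frac{1}{2\pi r}\int_{\partial\Omega_r}\partial_n\log|g|\,ds$, which by Green's theorem (equivalently the argument principle, after excising small disks about the zeros of $g$ where $\log|g|$ has its logarithmic singularities) equals $n_g(r)/r$. Integrating from $r_0$ to $r$ shows the bracket equals $N_f(r,D)+J(r_0)$ with $J(r_0)$ a constant, which is the required Jensen formula.

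Combining the two displays yields $d\,T_f(r)=m_f(r,D)+N_f(r,D)+O(1)$ with a genuinely $r$-independent error, and in particular no exceptional set is involved. The main obstacle is this Jensen step on the multiply connected $\Omega_r$: one must keep the sign bookkeeping for the $M$ inner circles correct (the double sign flip above) and justify Green's theorem in the presence of the logarithmic singularities of $\log|g|$ at the zeros of $Q(f)$, handling them by the usual excision of small disks together with the local expansion $\log|g|\sim m_k\log|z-z_k|$. All remaining terms are either constants absorbed into $J(r_0)$ or boundary zeros that occur for only countably many $r$ and hence do not affect the integrals.
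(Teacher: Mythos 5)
Your argument is correct and follows essentially the same route as the paper: both proofs reduce the statement to Jensen's formula for the single holomorphic, zero-free-of-poles function $Q(f)$ on the $M$-punctured plane, after which the $\log|Q(f)|$ boundary integrals cancel against those hidden in $m_f(r,D)$, leaving $d\,T_f(r)+O(1)$. The only difference is that the paper invokes this Jensen formula as Lemma~\ref{lm21} (quoted from Hanyak--Kondratyuk) while you supply its proof by differentiating in $r$ and applying Green's theorem on $\Omega_r$; your sign bookkeeping for the inner circles and your handling of the logarithmic singularities are correct, so this is a self-contained variant of the same proof.
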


Note that, when $d=1$, we get the corollary as following:
\begin{cor}\label{cor1}
  Let $H$ be a hyperplane in $\PP^n(\C)$ and $f=(f_{0}:\dots:f_{n}) : \Omega \ax \PP^{n}(\C)$  be a 
holomorphic curve whose image is not contained $H$. Then we have for any $r_0< r <+\infty$,
$$T_{f}(r)=m_{f}(r,H)+N_{f}(r,H)+O(1),$$
where $O(1)$ is a constant independent of $r$.
\end{cor}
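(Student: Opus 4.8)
The plan is to obtain the Corollary as the degree-one case of Theorem \ref{Th1}. A hyperplane $H$ is precisely a hypersurface of $\PP^n(\C)$ of degree $d=1$, and its defining homogeneous polynomial is the linear form $L(z_0,\dots,z_n)=\sum_{j=0}^n a_jz_j$; thus $Q=L$, $d=1$, and $Q(f)=(a,f)$. With these substitutions the quantities $m_f(r,D)$, $N_f(r,D)$ and $dT_f(r)$ appearing in Theorem \ref{Th1} become exactly $m_f(r,H)$, $N_f(r,H)$ and $T_f(r)$, since $\|f\|^{1}/|(a,f)|=\|f\|/|(a,f)|$ and the zeros of $Q(f)$ are the zeros of $(a,f)$. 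Hence $dT_f(r)=m_f(r,D)+N_f(r,D)+O(1)$ specializes to $T_f(r)=m_f(r,H)+N_f(r,H)+O(1)$. The hypothesis that the image of $f$ is not contained in $H$ gives $(a,f)\not\equiv 0$, so $m_f(r,H)$ is well defined and the standing assumption of Theorem \ref{Th1} is met.

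For completeness I indicate how I would argue directly, since the Corollary is exactly the First Main Theorem for hyperplanes on $\Omega$. Using $\log\frac{\|f\|}{|(a,f)|}=\log\|f\|-\log|(a,f)|$ and separating the terms inside the integrals defining $m_f(r,H)$ gives
$$m_f(r,H)=T_f(r)-\Big[\frac{1}{2\pi}\int_0^{2\pi}\log|(a,f)(re^{i\theta})|\,d\theta+\frac{1}{2\pi}\sum_{j=1}^M\int_0^{2\pi}\log|(a,f)(c_j+\tfrac1r e^{i\theta})|\,d\theta\Big].$$
Writing $I(r)$ for the bracketed quantity, the Corollary becomes the generalized Jensen identity $I(r)=N_f(r,H)+O(1)$ for the holomorphic function $g:=(a,f)$, which is not identically zero because the image of $f$ avoids $H$.

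To prove that identity I would work on $\Omega_r$, whose boundary is the outer circle $|z|=r$ together with the inner circles $|z-c_j|=1/r$. Although $g$ has essential singularities at the points $c_j$, it is holomorphic on $\overline{\Omega}_r$ and hence has only finitely many zeros there. Differentiating $I(r)$ in $r$, and noting that on the inner circles the outward normal of $\Omega_r$ points toward $c_j$ while those circles shrink as $r$ grows --- so that the two sign reversals cancel --- yields $\frac{dI}{dr}=\frac{1}{2\pi r}\int_{\partial\Omega_r}\frac{\partial\log|g|}{\partial n}\,ds$. By Green's theorem together with $\Delta\log|g|=2\pi\sum_k m_k\delta_{z_k}$, this flux equals $2\pi\,n_f(r,H)$, so $\frac{dI}{dr}=n_f(r,H)/r$ and integration from $r_0$ gives $I(r)=N_f(r,H)+I(r_0)$; alternatively one may simply invoke the generalized Jensen formula for $M$-punctured planes of Hanyak and Kondratyuk \cite{KH} applied to $(a,f)$. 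The only delicate point I anticipate is the flux computation near the essential singularities: one must justify differentiation under the integral sign and check that no zeros of $(a,f)$ accumulate on the moving boundary circles. Zeros lying on a boundary circle for isolated values of $r$ contribute nothing and are absorbed by continuity, which is why the conclusion holds for every $r\in(r_0,+\infty)$ with $O(1)=I(r_0)$ an honest constant independent of $r$.
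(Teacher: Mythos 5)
Your proof is correct and follows the paper's own route: the paper derives Corollary \ref{cor1} precisely as the $d=1$ specialization of Theorem \ref{Th1}, exactly as in your first paragraph. Your supplementary direct argument via the generalized Jensen formula for $M$-punctured planes is also essentially how the paper proves Theorem \ref{Th1} itself (via Lemma \ref{lm21}), so nothing genuinely different is being done.
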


\begin{theorem}\label{Th2}
Let $ f=(f_{0}:\dots:f_{n}) : \Omega \ax \PP^{n}(\C)$  be a linearly non-degenerate holomorphic
 curve  and $H_{1},\dots , H_{q}$ be hyperplanes in $\PP^{n}(\C)$ in general position.  Then we have for any $r_0< r <+\infty,$
$$\| \quad (q-n-1)T_{f}(r)\leqslant\sum_{l=1}^{q} N^{n}_{f}(r,H_l)+O(\log r+ \log T_{f}(r)).$$
\end{theorem}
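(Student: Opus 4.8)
The plan is to run Cartan's Wronskian method, transplanted to the $M$-punctured plane $\Omega$, with the classical Jensen formula and the lemma on the logarithmic derivative replaced by their $\Omega$-analogues. First I would fix a reduced representation $f=(f_0:\dots:f_n)$ and form the Wronskian $W=W(f_0,\dots,f_n)$; since $f$ is linearly non-degenerate, $W\not\equiv 0$ on $\Omega$. Writing $g_l=(a_l,f)$ for the linear form attached to $H_l$, the heart of the argument is Cartan's pointwise estimate: at each $z$ one orders the values $|g_l(z)|$ increasingly and lets $I(z)=\{i_0,\dots,i_n\}$ be the indices of the $n+1$ smallest. General position forces the forms $g_{i_0},\dots,g_{i_n}$ to be linearly independent, so that $W(g_{i_0},\dots,g_{i_n})=(\det A_I)\,W$ with $\det A_I\ne 0$, and it guarantees that the remaining $q-n-1$ quotients $\|f\|/|g_l|$ stay bounded. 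This yields the pointwise bound
\[
\sum_{l=1}^{q}\log\frac{\|f\|}{|g_l|}\le (n+1)\log\|f\|-\log|W|+\log\left|\frac{W(g_{i_0},\dots,g_{i_n})}{g_{i_0}\cdots g_{i_n}}\right|+O(1).
\]

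Next I would integrate this inequality in the sense of the $\Omega$-characteristic, i.e. add the circle integral over $|z|=r$ and the $M$ puncture integrals over $|z-c_j|=1/r$. The left-hand side becomes $\sum_{l} m_f(r,H_l)$. On the right, $(n+1)\log\|f\|$ integrates to $(n+1)T_f(r)$, while $-\log|W|$ integrates, by the $\Omega$-Jensen formula underlying Theorem~\ref{Th1} (cf. \cite{KH}), to $-N_W(r)+O(1)$, where $N_W$ is the counting function of the zeros of $W$ in $\overline{\Omega}_r$. The determinant quotient equals $\det(g_{i_k}^{(j)}/g_{i_k})_{0\le j,k\le n}$, a sum of products of logarithmic derivatives and their derivatives; here I would invoke the \emph{lemma on the logarithmic derivative for the $M$-punctured plane}, bounding the $\Omega$-proximity of each $g^{(j)}/g$ by $O(\log r+\log T_f(r))$ outside a set of finite measure. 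Collecting the terms gives
\[
\sum_{l=1}^{q} m_f(r,H_l)\le (n+1)T_f(r)-N_W(r)+O(\log r+\log T_f(r)).
\]

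To finish, I would apply the First Main Theorem in the form $m_f(r,H_l)=T_f(r)-N_f(r,H_l)+O(1)$ (Corollary~\ref{cor1}) and sum over $l$, which turns the last display into
\[
(q-n-1)T_f(r)\le \sum_{l=1}^{q} N_f(r,H_l)-N_W(r)+O(\log r+\log T_f(r)).
\]
It then remains to absorb the Wronskian term into the truncation. By general position at most $n$ of the $g_l$ vanish at any common point, and at a zero of $g_l$ of multiplicity $\mu$ the Wronskian $W$ vanishes to order at least $\mu-n$; the standard multiplicity bookkeeping therefore gives $\sum_l\big(N_f(r,H_l)-N^n_f(r,H_l)\big)\le N_W(r)+O(1)$. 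Substituting this into the previous display produces exactly $(q-n-1)T_f(r)\le \sum_{l=1}^{q} N^n_f(r,H_l)+O(\log r+\log T_f(r))$, with the exceptional set absorbed into the symbol $\|$.

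The main obstacle is the logarithmic derivative step. Unlike the case of $\C$, the coordinate functions $f_j$ may carry essential singularities at the points $c_j$, so the puncture integrals in $m_f$ and $T_f$ do not disappear and the classical Nevanlinna estimate cannot simply be quoted. The crucial point to establish, or to import from the annulus/punctured-plane theory of Khrystiyanyn–Kondratyuk and Phuong–Thin \cite{TP}, is that the $\Omega$-proximity of $g'/g$ measured with the doubled integral remains $O(\log r+\log T_f(r))$ off a set of finite measure; this forces one to control $\log|g'/g|$ on the small circles $|z-c_j|=1/r$ near the essential singularities and to combine it with a Borel-type growth lemma adapted to $\Omega$. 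Once this $\Omega$-logarithmic-derivative lemma is in hand, the remainder is the standard Cartan bookkeeping carried out above.
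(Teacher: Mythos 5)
Your proposal is correct and follows essentially the same route as the paper: Cartan's Wronskian method on $\Omega$, with the $M$-punctured Jensen formula and the logarithmic derivative lemma of Hanyak--Kondratyuk \cite{KH} replacing their classical counterparts, followed by the First Main Theorem and the standard local multiplicity count to pass from $N_f(r,H_l)-N_W(r,0)$ to the truncated functions $N^n_f(r,H_l)$. The only difference is organizational (you state the pointwise Cartan inequality and integrate, while the paper packages the same content as Lemma \ref{lm31} and Lemma \ref{lm32} using a maximum over injections $\mu\in\mathcal T$), and you correctly isolate the $\Omega$-logarithmic-derivative lemma as the one ingredient that must be imported, which is exactly Lemma \ref{lm22} in the paper.
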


\begin{theorem}\label{Th3}
 Let $V\subset \PP^{N}(\C)$ be a complex projective variety of dimension $n\ge 1.$ Let $D_1, \dots, D_q$
 be hypersurfaces in $\PP^N(\C)$ of degree $d_j,$ located in general position on $V.$ Let $d$ be the least common multiple of the 
$d_i,$ $i=1, \dots, q.$ Let $ f=(f_{0}:\dots:f_{N}) : \Omega \ax V$  be an algebraically non-degenerate holomorphic map. Let $\varepsilon >0$
 and $$ \alpha \ge \dfrac{n^nd^{n^2+n}(19nI(\varepsilon^{-1}))^n(\deg V)^{n+1}}{n!},$$
where $I(x):=\min\{k\in \N: k>x\}$ for a positive real number $x.$ Then
$$\| \quad (q(1-\varepsilon/3)-(n+1)-\varepsilon/3)T_{f}(r)\leqslant\sum_{l=1}^{q}d_l^{-1} N^{\alpha}_{f}(r,Q_l)
+O(\log r+ \log T_{f}(r)).$$
\end{theorem}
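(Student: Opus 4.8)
The plan is to reduce Theorem \ref{Th3} to the linear second main theorem, Theorem \ref{Th2}, by the Veronese--filtration method of Ru and of Chen--Ru--Yan; the point is that every analytic ingredient the method needs on $\Omega$ is already supplied by Theorem \ref{Th1} and Theorem \ref{Th2}, whereas its combinatorial core is purely algebraic and hence insensitive to the essential singularities at $c_1,\dots,c_M$. First I would normalize the degrees: replacing each $Q_l$ by $Q_l^{d/d_l}$ turns all the defining forms into forms of the common degree $d$ without changing their supports, so the new hypersurfaces stay in general position on $V$, and on the level of the Nevanlinna functions this merely rescales by $d/d_l$, which is exactly what produces the weights $d_l^{-1}$ in the final inequality. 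Next, for a large integer $m$ (whose size is fixed at the end in terms of $\varepsilon,n,d$ and $\deg V$), let $V_m$ be the space of homogeneous polynomials of degree $m$ on $\PP^N(\C)$ restricted to $V$, and put $K=\dim V_m=H_V(m)$, which for $m$ large equals $\tfrac{\deg V}{n!}m^n+O(m^{n-1})$. Fixing a basis $\phi_0,\dots,\phi_{K-1}$ of $V_m$ and setting $F=(\phi_0(f):\dots:\phi_{K-1}(f))$ gives a holomorphic map $F:\Omega\ax\PP^{K-1}(\C)$ that is linearly non-degenerate because $f$ is algebraically non-degenerate. The only place where the punctured-plane structure enters at this stage is the comparison $\log\|F(z)\|=m\log\|f(z)\|+O(1)$ (each $\phi_i$ has degree $m$, and for $m$ large $V_m$ is base-point-free on $V$); integrating it over the large circle $|z|=r$ and the small circles $|z-c_j|=1/r$ in the definition of the characteristic function gives $T_F(r)=m\,T_f(r)+O(1)$.

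The algebraic heart is Ru's filtration estimate. For each $l$ I would form the decreasing filtration of $V_m$ whose $b$-th term $W_b^{(l)}$ consists of the forms in $V_m$ divisible by $Q_l^{b}$ modulo the forms vanishing on $V$, choose a basis of $V_m$ adapted to it, and express that basis through $\phi_0,\dots,\phi_{K-1}$; the resulting linear forms define hyperplanes of $\PP^{K-1}(\C)$, and since only the relative order of the $q$ hypersurfaces at a point matters, there are only finitely many such adapted bases $J$. Mumford's bound on the associated Chow weight, combined with the elementary Hilbert-function computation $\sum_{b\ge 0}\dim W_b^{(l)}=\sum_{b}H_V(m-bd)\ge\frac{m}{d(n+1)}H_V(m)\bigl(1-\varepsilon/3\bigr)$ valid for $m$ large, yields the pointwise inequality
$$\frac1d\sum_{l=1}^{q}\log\frac{\|f(z)\|^{d}}{|Q_l(f)(z)|}\le\frac{(n+1)(1+\varepsilon/3)}{mK}\,\max_{J}\sum_{i\in J}\log\frac{\|F(z)\|}{|L_{J,i}(F)(z)|}+O(1),$$
where $L_{J,i}$ are the coordinate forms of the adapted basis $J$. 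Integrating over the circles in the definition of $T_f$ turns the left-hand side into $\frac1d\sum_l m_f(r,D_l)$, while the maximum on the right is dominated, up to $O(1)$, by the full sum over the finitely many hyperplanes occurring in all the $J$.

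It then remains to apply Theorem \ref{Th2} to the linearly non-degenerate $F$ and the finite family of hyperplanes arising from the adapted bases (decomposing it into general-position subfamilies as in Ru's argument). Together with the first main theorem for $F$, i.e. the case $d=1$ of Theorem \ref{Th1} recorded in Corollary \ref{cor1}, this bounds the integral of $\max_{J}\sum_{i\in J}\log\frac{\|F\|}{|L_{J,i}(F)|}$ by $K\,T_F(r)+O(\log r+\log T_F(r))$, so the displayed estimate becomes
$$\frac1d\sum_{l=1}^{q}m_f(r,D_l)\le (n+1)(1+\varepsilon/3)\,T_f(r)+O(\log r+\log T_f(r)),$$
after substituting $T_F(r)=m\,T_f(r)+O(1)$. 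Finally, trading proximity for counting via the first main theorem $d_l T_f(r)=m_f(r,D_l)+N_f(r,D_l)+O(1)$ of Theorem \ref{Th1}, undoing the degree normalization, and regrouping the $\varepsilon$-terms produces the asserted inequality with the weights $d_l^{-1}$; the exceptional set hidden in ``$\|$'' is inherited from that of Theorem \ref{Th2}.

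The step I expect to be the main obstacle is the explicit truncation bookkeeping. The truncation level $K-1$ available for $F$ in $\PP^{K-1}(\C)$ must be pushed down, through the bounded vanishing orders $b\le m/d$ in the filtration together with Mumford's multiplicity estimate, to a single truncation level for $f$; checking that the stated value $\alpha=\frac{n^nd^{n^2+n}(19nI(\varepsilon^{-1}))^n(\deg V)^{n+1}}{n!}$ actually emerges from the chosen $m=m(\varepsilon,n,d,\deg V)$ and the Hilbert-function asymptotics is the delicate, essentially arithmetic part of the argument. By contrast, no new analytic estimate near the punctures is required: all the genuinely function-theoretic content is already packaged in the characteristic, proximity and counting calculus on $\Omega$ furnished by Theorems \ref{Th1} and \ref{Th2}.
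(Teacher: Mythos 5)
Your overall strategy --- normalize the degrees, build an auxiliary linearly non-degenerate curve $F$ on $\Omega$ from a basis of degree-$m$ forms, prove a pointwise weight inequality by purely algebraic means, and then feed everything into the linear machinery already established on $\Omega$ --- is the same as the paper's, which follows Ru's argument and its truncated refinement by Chen, Ru and Yan; you are also right that no new analysis near the punctures is needed beyond Theorem \ref{Th1} and Lemma \ref{lm31}. But two steps, as you describe them, do not yet constitute a proof. First, the algebraic core: you propose, for each single $l$, a filtration of $V_m$ by divisibility by powers of $Q_l$, and then write a pointwise inequality in which one adapted basis $J$ simultaneously absorbs the contributions of all $q$ hypersurfaces. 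Separate single-$l$ filtrations do not produce such a joint basis: the gains for different $l$ are realized by different bases and cannot simply be added. One needs either a joint filtration attached to the $(n+1)$-subset $\{i_0,\dots,i_n\}$ realizing the minimum at $z$ (delicate for a general variety $V$), or --- as the paper does --- the passage to $Y=\psi(V)\subset\PP^{q-1}(\C)$ via $\psi=[Q_1:\dots:Q_q]$ and the Hilbert-weight/Chow-weight inequalities of Lemmas \ref{lm23} and \ref{lm24}, which package exactly this joint estimate with the explicit error $\frac{(2n+1)\Delta}{m}\max_i c_{i}$, where $\Delta=\deg Y\le d^{n}\deg V$. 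The choice is not cosmetic: the factors $d^{n^2+n}$ and $(\deg V)^{n+1}$ in $\alpha$ arise as $\Delta^{n+1}$, i.e.\ from the geometry of $Y$, not of $V$, and in the paper's setup $T_F(r)\le dm\,T_f(r)+O(1)$ rather than $m\,T_f(r)+O(1)$.

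Second, and more seriously, the truncation. Your intermediate inequality bounds only $\sum_l m_f(r,D_l)$ after discarding the Wronskian term, so the first main theorem converts it into the estimate with \emph{untruncated} counting functions, which is strictly weaker than the statement. To get $N_f^{\alpha}$ one must retain $-N_W(r,0)$ from Lemma \ref{lm31}, compare $\frac{n+1}{mH_Y(m)}\sum_l N_f(r,D_l)-N_W(r,0)$ with $\frac{n+1}{mH_Y(m)}\sum_l N_f^{n_m}(r,D_l)$ plus a controlled error (the paper's inequality \eqref{ct52}), and then check that the specific choice $m=18n^2\Delta I(\varepsilon^{-1})$ simultaneously makes the $\varepsilon/3$-losses work and gives $n_m\le\Delta\binom{m+n}{n}\le\alpha$. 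You explicitly defer exactly this computation; since the explicit truncation level is the entire content of Theorem \ref{Th3} beyond the untruncated second main theorem, the proposal as written leaves the essential step unproved.
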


When $f$ is holomorphic at $c_j, j=1, \dots, M,$ we get following result of Ru et. al \cite{Ru4}. Thus Theorem \ref{Th3} is an extension of Ru et. al \cite{Ru4} and Ru \cite{Ru3} for holomorphic curve with essential singularity points.

\begin{theorem}\label{Th4}
 Let $V\subset \PP^{N}(\C)$ be a complex projective variety of dimension $n\ge 1.$ Let $D_1, \dots, D_q$
 be hypersurfaces in $\PP^N(\C)$ of degree $d_j,$ located in general position on $V.$ Let $d$ be the least common multiple of the 
$d_i,$ $i=1, \dots, q.$ Let $ f=(f_{0}:\dots:f_{N}) : \C \ax V$  be an algebraically non-degenerate holomorphic map. Let $\varepsilon >0$
 and $$ \alpha \ge \dfrac{n^nd^{n^2+n}(19nI(\varepsilon^{-1}))^n(\deg V)^{n+1}}{n!},$$
where $I(x):=\min\{k\in \N: k>x\}$ for a positive real number $x.$ Then
$$\quad (q(1-\varepsilon/3)-(n+1)-\varepsilon/3)T_{f}(r)\leqslant\sum_{l=1}^{q}d_l^{-1} N^{\alpha}_{f}(r,Q_l)+O(\log r+ \log T_{f}(r)),$$
holds for all $r\in (0, +\infty)$ outside a set of finite measure.
\end{theorem}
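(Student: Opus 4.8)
The plan is to obtain Theorem~\ref{Th4} as the specialization of Theorem~\ref{Th3} to a map that extends holomorphically across the chosen puncture points; the resulting inequality is then exactly the second main theorem of Chen--Ru--Yan on $\C$. Given an algebraically non-degenerate holomorphic map $f:\C\to V$, I would fix arbitrary distinct points $c_1,\dots,c_M\in\C$ and regard $f$ as a holomorphic curve on $\Omega=\C\setminus\{c_1,\dots,c_M\}$ that is, moreover, holomorphic at each $c_j$. Theorem~\ref{Th3} then applies verbatim and yields
$$\| \quad (q(1-\varepsilon/3)-(n+1)-\varepsilon/3)T_{f}(r)\leqslant\sum_{l=1}^{q}d_l^{-1} N^{\alpha}_{f}(r,Q_l)+O(\log r+ \log T_{f}(r)),$$
where $T_f(r)$ and $N^{\alpha}_{f}(r,Q_l)$ are the punctured-plane functions of the Introduction. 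It remains to replace these by their classical counterparts on $\C$ and to enlarge the exceptional set.

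For the characteristic function, holomorphy of $f$ at each $c_j$ keeps $\|f\|$ bounded near $c_j$, so each integral $\tfrac{1}{2\pi}\int_{0}^{2\pi}\log\|f(c_j+\tfrac{1}{r}e^{i\theta})\|\,d\theta$ is $O(1)$ for $r$ large; as already recorded in the Introduction this gives $T_f(r)=\tfrac{1}{2\pi}\int_{0}^{2\pi}\log\|f(re^{i\theta})\|\,d\theta+O(1)$, the usual Nevanlinna--Cartan characteristic up to a constant. For the counting functions I would use $\overline\Omega_t\subset\overline{D_t(0)}$: the punctured truncated count $n^{\alpha}_{f}(t,Q_l)$ is at most the classical count $\widetilde n^{\alpha}_{f}(t,Q_l)$ taken over $\overline{D_t(0)}$, the discrepancy being the truncated zeros of $Q_l(f)$ lying in the shrinking excluded disks $\overline D_{1/t}(c_j)$. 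Since $f$ is holomorphic at $c_j$, for large $t$ these disks meet no zero of $Q_l(f)$ except possibly $c_j$ itself, which is excluded from $\overline\Omega_t$; hence after the integration $\int\frac{dt}{t}$ the two counting functions differ by $O(\log r)$, and in particular $N^{\alpha}_{f}(r,Q_l)\le\widetilde N^{\alpha}_{f}(r,Q_l)+O(\log r)$, where $\widetilde N^{\alpha}_{f}$ denotes the classical truncated counting function.

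Substituting these comparisons into the displayed inequality, the $O(1)$ and $O(\log r)$ corrections are absorbed into $O(\log r+\log T_f(r))$ and one recovers the asserted estimate with the classical functions; note that because the counting functions sit on the larger side of the inequality, passing from $N^{\alpha}_{f}$ to the bigger $\widetilde N^{\alpha}_{f}$ only weakens the bound in the permitted direction. Finally, Theorem~\ref{Th3} gives validity for $r\in(r_0,+\infty)$ off a set of finite measure; adjoining the finite-measure interval $(0,r_0]$ to the exceptional set extends the conclusion to all $r\in(0,+\infty)$ outside a set of finite measure. The only step needing genuine care is confirming that the puncture corrections and the zero-count discrepancy are truly $O(\log r)$, which is precisely where the holomorphy of $f$ at the $c_j$ is used; everything else is a direct transcription of Theorem~\ref{Th3}.
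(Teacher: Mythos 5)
Your proposal is correct and follows exactly the route the paper intends: the paper gives no separate proof of Theorem \ref{Th4}, justifying it only by the remark that when $f$ is holomorphic at the $c_j$ the punctured-plane quantities reduce to the classical ones, so Theorem \ref{Th3} specializes to the Chen--Ru--Yan statement on $\C$. You supply the bookkeeping (the $O(1)$ correction to $T_f$, the $O(\log r)$ discrepancy in the counting functions, the sign of each replacement, and the enlargement of the exceptional set) that the paper leaves implicit, and all of it is sound.
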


Theorem \ref{Th1} and Corollary \ref{cor1} is first main theorem, and Theorem \ref{Th2} is second main theorem for holomorphic curves from $M-$ 
punctured $\Omega$ to $\PP^n(\C)$ intersecting a collection of fixed hyperplanes in general position with truncated counting functions. 
Theorem \ref{Th3} is second main theorem for holomorphic curves from $M-$ punctured
 $\Omega$ to $V$ intersecting a collection of  fixed hypersurfaces in general position with  the level of 
truncation. When one applies inequalities of second main theorem type, it is often crucial to the application to have the inequality 
with truncated counting functions. For example, all existing constructions of unique range sets depend on a second main theorem  
with truncated counting functions. 

\begin{theorem}\label{Th10} Let $f: \Omega \to \mathbb P^N(\mathbb C)$ be an algebraically nondegenerate
holomorphic curve. Let $d $ and $n$ be two integers with $n>N(d+N+1).$ Let 
$\mathcal H_i=\{z\in \mathbb P^{N}(\mathbb C), \mathcal H_i(z)=0\}, 0\le i\le N,$ be hyperplanes in $\mathbb P^N(\mathbb C).$ Let $D_i=\{z\in \mathbb P^{N}(\mathbb C), Q_i(z)=0\}, 
0\le i\le N,$ be hypersurfaces of degree $d$ such that the hypersurfaces
 $\{\mathcal H_0^nQ_0=0\}, \dots, \{\mathcal H_N^nQ_N=0\}$ are in general position in 
$\mathbb P^N(\mathbb C).$ Let 
 $D=\{z\in \mathbb P^N(\mathbb C), \sum_{i=0}^{N}\mathcal H_i^nQ_i=0\}.$ Then
\begin{align*}
\|(n-(d+N+1)N) T_{f}(r)&+\sum_{i=0}^{N}( N_{f}(r, D_i)
- N_{f}^{N}(r, D_i))\\
&\le N_{f}^{N}(r, D)+o(T_{f}(r)).
\end{align*}
\end{theorem}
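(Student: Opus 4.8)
The plan is to reduce the statement to the second main theorem for hyperplanes (Theorem \ref{Th2}) by passing to the auxiliary holomorphic curve
$$F=\bigl(\mathcal H_0^n(f)Q_0(f):\cdots:\mathcal H_N^n(f)Q_N(f)\bigr):\Omega\ax\PP^N(\C).$$
Writing $P_i=\mathcal H_i^nQ_i$ for the homogeneous polynomial of degree $n+d$, so that $F=(P_0(f):\cdots:P_N(f))$, I work in the target $\PP^N(\C)$ with the $N+2$ hyperplanes $L_i=\{w_i=0\}$, $0\le i\le N$, and $L=\{w_0+\cdots+w_N=0\}$. Any $N+1$ of the forms $w_0,\dots,w_N,w_0+\cdots+w_N$ are linearly independent, so $L_0,\dots,L_N,L$ are in general position. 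The pullback of $L$ under $F$ is the zero divisor of $\sum_iP_i(f)$, which is exactly that of the polynomial $\sum_i\mathcal H_i^nQ_i$ defining $D$; thus $N_F^N(r,L)=N_f^N(r,D)$, while the pullback of $L_i$ is the zero divisor of $P_i(f)$.

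First I would verify that $F$ is linearly non-degenerate. A nontrivial relation $\sum_ic_iP_i(f)\equiv0$ would place the image of $f$ in the hypersurface $\{\sum_ic_iP_i=0\}$; algebraic non-degeneracy of $f$ forces $\sum_ic_iP_i\equiv0$ as a polynomial, and the general position hypothesis (the supports of $P_0,\dots,P_N$ have empty common intersection in $\PP^N(\C)$) then forces all $c_i=0$. Hence Theorem \ref{Th2}, applied to $F$ with $q=N+2$ and truncation level $N$, gives
$$\|\quad T_F(r)\le\sum_{i=0}^NN_F^N(r,L_i)+N_f^N(r,D)+O(\log r+\log T_F(r)).$$

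Next I would bound both sides in terms of $f$. For the truncated counting functions I use $\mathrm{ord}_zP_i(f)=n\,\mathrm{ord}_z\mathcal H_i(f)+\mathrm{ord}_zQ_i(f)$: since $n>N(d+N+1)\ge N$, every zero of $\mathcal H_i(f)$ already saturates the level-$N$ truncation, so
$$N_F^N(r,L_i)\le N\,\overline N_f(r,\mathcal H_i)+N_f^N(r,D_i)\le N\,T_f(r)+N_f^N(r,D_i)+O(1),$$
where $\overline N_f$ counts zeros without multiplicity and the last inequality is the first main theorem for the hyperplane $\mathcal H_i$ (Corollary \ref{cor1}). For the lower bound on $T_F$, the hyperplanes $\mathcal H_0,\dots,\mathcal H_N$ inherit general position from $P_0,\dots,P_N$, so $\max_i|\mathcal H_i(f)|\ge c\|f\|$ for some $c>0$; evaluating $\|F\|$ at the maximizing index gives $\|F\|\ge c^n\|f\|^n\min_i|Q_i(f)|$, whence $\log\|F\|\ge n\log\|f\|+\min_i\log|Q_i(f)|+O(1)$ pointwise. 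Using $\min_i\log|Q_i(f)|\ge\sum_i\log|Q_i(f)|-Nd\log\|f\|-O(1)$ (valid since $|Q_i(f)|\le C\|f\|^d$) together with the Jensen identity $\frac1{2\pi}\int_0^{2\pi}\log|Q_i(f)|\,d\theta+(\text{punctured terms})=N_f(r,D_i)+O(1)$, which follows from Theorem \ref{Th1}, integration yields
$$T_F(r)\ge(n-Nd)T_f(r)+\sum_{i=0}^NN_f(r,D_i)+O(1).$$

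Finally I would substitute these two estimates into the second main theorem inequality and rearrange. Since $n-Nd-(N+1)N=n-N(d+N+1)$ and the crude bound $T_F(r)\le(n+d)T_f(r)+O(1)$ turns the error $O(\log r+\log T_F(r))$ into $o(T_f(r))$, this produces exactly the asserted inequality, with the truncation defect $\sum_i\bigl(N_f(r,D_i)-N_f^N(r,D_i)\bigr)$ appearing on the left with coefficient one. I expect the main obstacle to be the lower bound for $T_F$: recovering the full, non-truncated terms $\sum_iN_f(r,D_i)$ requires the norm estimate $\|F\|\ge c^n\|f\|^n\min_i|Q_i(f)|$ and the careful passage from $\min_i\log|Q_i(f)|$ to $\sum_iN_f(r,D_i)$ through Jensen's formula, and it is precisely the hypothesis $n>N(d+N+1)$ that both keeps the surviving coefficient $n-N(d+N+1)$ positive and guarantees that each zero of $\mathcal H_i(f)$ saturates the level-$N$ truncation.
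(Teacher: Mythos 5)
Your proposal is correct and follows essentially the same route as the paper: the same auxiliary curve $F=\bigl(\mathcal H_0^n(f)Q_0(f):\cdots:\mathcal H_N^n(f)Q_N(f)\bigr)$, the same $N+2$ hyperplanes $\{w_i=0\}$ and $\{w_0+\cdots+w_N=0\}$ fed into Theorem \ref{Th2}, the same bound $N_F^N(r,L_i)\le N\,T_f(r)+N_f^N(r,D_i)+O(1)$, and the same key lower bound $T_F(r)\ge (n-Nd)T_f(r)+\sum_i N_f(r,D_i)+O(1)$. The only differences are minor: the paper derives that lower bound via Hilbert's Nullstellensatz applied to the general-position family $\{\mathcal H_i^nQ_i\}$ (giving $T_F\ge (n+d)T_f+O(1)$) followed by the First Main Theorem, whereas you use general position of the $\mathcal H_i$ alone together with a $\min$-to-$\sum$ estimate and Jensen's formula, and you additionally supply the linear non-degeneracy check for $F$ that the paper leaves implicit.
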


We give a hypersurfaces satisfying Theorem \ref{Th10}.
\begin{exam}\label{exam2} Let $D_i=\{z=(x_0:\dots:x_N)\in \mathbb P^{N}(\mathbb C),  x_i^d=0\}, 
0\le i\le N,$ be hypersurfaces of degree $d.$  Let $\mathcal H_i=\{z=(x_0:\dots:x_N)\in \mathbb P^{N}(\mathbb C), 
\sum_{t=0}^{i}x_t=0\}.$
We see that the hypersurfaces
 $\{(\sum_{t=0}^{i}x_t)^{n}x_i^{d}=0\}, 0\le i\le N,$ are in general position in $\mathbb P^N(\mathbb C).$ Then 
 $$D=\{z\in \mathbb P^N(\mathbb C), \sum_{i=0}^{N}(\sum_{t=0}^{i}x_t)^{n}x_i^{d}=0\}$$ satisfies the 
Theorem \ref{Th10} with $n>N(d+N+1).$
\end{exam}

As an application of Theorem \ref{Th10}, we prove the uniqueness theorem for holomorphic curves on $M-$ punctured 
complex plane by inverse image of a Fermat hypersurface.
\begin{theorem}\label{Th6}
Let $f, g :  \Omega \to \mathbb P^N(\mathbb C)$ be two algebraically nondegenerate
holomorphic curves, and $n$ be a integer with $n>N(d+N+3).$ Let $D$ be a hypersurface as in Theorem \ref{Th10}.  
Suppose that $f(z)=g(z)$ on $f^{-1}(D)\cup g^{-1}(D),$ then $f\equiv g.$
\end{theorem}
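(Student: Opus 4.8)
The plan is to argue by contradiction, combining two applications of the second main theorem of Theorem~\ref{Th10} with a sharing estimate on the counting functions; the gain $n>N(d+N+3)$ is exactly what closes the numerics. Assume $f\not\equiv g$. Apply Theorem~\ref{Th10} to $f$ and to $g$ and add the two inequalities. Since every summand $N_\bullet(r,D_i)-N_\bullet^N(r,D_i)$ is nonnegative, discarding them gives
$$\| \left(n-(d+N+1)N\right)\left(T_f(r)+T_g(r)\right)\le N_f^N(r,D)+N_g^N(r,D)+o\left(T_f(r)+T_g(r)\right).$$
It therefore suffices to bound $N_f^N(r,D)+N_g^N(r,D)$ above by roughly $2N\left(T_f(r)+T_g(r)\right)$.

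The heart of the argument is a projective analogue of Nevanlinna's five–point mechanism. Fix reduced representations $f=(f_0:\dots:f_N)$ and $g=(g_0:\dots:g_N)$. Since $f\not\equiv g$ in $\mathbb P^N(\C)$ the coordinate vectors are not proportional, so some minor $W:=f_ig_j-f_jg_i$ is not identically zero. By hypothesis $f^{-1}(D)=g^{-1}(D)$ and $f=g$ on this set, hence at each point $z_0$ of $f^{-1}(D)$ the vectors $(f_k(z_0))_k$ and $(g_k(z_0))_k$ are proportional and every minor, in particular $W$, vanishes at $z_0$. Consequently every point of $f^{-1}(D)$ is a zero of $W$, so $N_f^1(r,D)\le N(r,1/W)$, where $N_f^1$ counts the distinct points of $f^{-1}(D)$. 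Using $\log\|f\wedge g\|\le\log\|f\|+\log\|g\|+O(1)$ (subadditivity of the $\Omega$–characteristic) together with the first main theorem (Theorem~\ref{Th1}) for an auxiliary curve having $W$ among its coordinates, one gets $N(r,1/W)\le T_f(r)+T_g(r)+O(1)$, and the same bound for $g$. Finally, truncation at level $N$ loses only a factor $N$, since $n_f^N(t,D)\le N\,n_f^1(t,D)$, so
$$N_f^N(r,D)\le N\,N_f^1(r,D)\le N\left(T_f(r)+T_g(r)\right)+O(1),$$
and likewise $N_g^N(r,D)\le N\left(T_f(r)+T_g(r)\right)+O(1)$.

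Substituting these two bounds into the summed second main theorem yields
$$\| \left(n-(d+N+1)N-2N\right)\left(T_f(r)+T_g(r)\right)\le o\left(T_f(r)+T_g(r)\right),$$
that is, $\big(n-N(d+N+3)\big)\big(T_f(r)+T_g(r)\big)\le o\big(T_f(r)+T_g(r)\big)$ outside a set of finite measure. The hypothesis $n>N(d+N+3)$ makes the left coefficient strictly positive, which is impossible because $T_f(r)+T_g(r)\to\infty$; hence $f\equiv g$. This displays precisely why one needs $n>N(d+N+3)$ rather than the bare $n>N(d+N+1)$ of Theorem~\ref{Th10}: the extra $2N$ is the combined cost of the two truncated bounds $N_f^N(r,D),N_g^N(r,D)\le N\left(T_f(r)+T_g(r)\right)$.

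The step I expect to require the most care is the counting estimate $N(r,1/W)\le T_f(r)+T_g(r)+O(1)$ in the $M$–punctured setting. On $\Omega$ the characteristic and counting functions carry the extra integrals over the inner circles $|z-c_k|=1/r$, and $W$ may have the punctures $c_k$ as essential singularities; so I must verify that both the subadditivity of the characteristic under products and sums and the first main theorem (hence the Jensen–type formula behind it) genuinely hold for these $\Omega$–functionals, controlling the contribution near the essential singularities — this is exactly the regime the characteristic of the introduction was designed to dominate. A secondary, routine point is the bookkeeping: merging the two finite exceptional sets attached to the symbol $\|$ coming from the two uses of Theorem~\ref{Th10}, and absorbing $o(T_f)+o(T_g)$ into $o(T_f+T_g)$, where one may assume $T_f$ and $T_g$ have comparable growth, since otherwise one of the one–sided inequalities already forces the contradiction.
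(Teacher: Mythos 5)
Your proposal is correct and follows essentially the same route as the paper: argue by contradiction, bound $N_f^N(r,D)\le N\,N_f^1(r,D)\le N(T_f(r)+T_g(r))+O(1)$ via the vanishing of a cross-difference of coordinates at every shared preimage point, and feed this into two applications of Theorem~\ref{Th10} to contradict $n>N(d+N+3)$. The only cosmetic difference is that you use the holomorphic minor $f_ig_j-f_jg_i$ with Jensen's formula, while the paper uses the meromorphic function $f_\alpha/f_\beta-g_\alpha/g_\beta$ and its characteristic; these are equivalent.
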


When $f$ and $g$ are holomorphic at $c_j, j=1, \dots, M,$ we obtain the corollary as following:

\begin{cor}\label{corth6}
Let $f, g : \mathbb C \to \mathbb P^N(\mathbb C)$ be two algebraically nondegenerate
holomorphic curves, and $n$ be a integer with $n>N(d+N+3).$ Let $D$ be a hypersurface as in Theorem \ref{Th10}.  
Suppose that $f(z)=g(z)$ on $f^{-1}(D)\cup g^{-1}(D),$ then $f\equiv g.$
\end{cor}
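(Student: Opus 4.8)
\emph{Proof proposal.} The plan is to argue by contradiction. Since $f,g\colon\C\to\PP^N(\C)$ are entire they are in particular holomorphic at the points $c_1,\dots,c_M$, so by the discussion following the definition of $T_f(r)$ the $\Omega$-characteristic coincides up to $O(1)$ with the classical Nevanlinna--Cartan characteristic and Theorem \ref{Th10} applies; the argument below is identical to the one proving Theorem \ref{Th6}. Suppose $f\not\equiv g$. Fixing reduced representations $f=(f_0:\dots:f_N)$ and $g=(g_0:\dots:g_N)$, there is a pair $0\le i<j\le N$ for which $h:=f_ig_j-f_jg_i\not\equiv 0$. The first step is to extract the set-theoretic content of the hypothesis: if $z\in f^{-1}(D)$ then $z$ lies in $f^{-1}(D)\cup g^{-1}(D)$, whence $f(z)=g(z)$ and therefore $g(z)=f(z)\in D$; symmetrically $g^{-1}(D)\subseteq f^{-1}(D)$, so $f^{-1}(D)=g^{-1}(D)$, and at every point of this common set $f$ and $g$ coincide as points of $\PP^N(\C)$, so $h$ vanishes there.

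Second, I would bound the truncated counting function $N_f^N(r,D)$. Every point of $f^{-1}(D)$ is a zero of $h$, so the level-one counting function $N_f^1(r,D)$, which counts each point of $f^{-1}(D)$ once, is at most the zero-counting function $N(r,1/h)$ of $h$. Viewing $h$ as the curve $(1:h)\colon\C\to\PP^1(\C)$ and applying the first main theorem (Corollary \ref{cor1}), one gets $N(r,1/h)\le T_h(r)+O(1)$, and from $|h|\le 2\|f\|\,\|g\|$ one gets $T_h(r)\le T_f(r)+T_g(r)+O(1)$. Since truncating at level $N$ multiplies each point's contribution by at most $N$,
\begin{equation*}
N_f^N(r,D)\le N\,N_f^1(r,D)\le N\bigl(T_f(r)+T_g(r)\bigr)+O(1),
\end{equation*}
and the same bound holds for $N_g^N(r,D)$ because $g^{-1}(D)=f^{-1}(D)$.

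Third, I would apply Theorem \ref{Th10} to $f$ and to $g$. Dropping the nonnegative term $\sum_{i=0}^N\bigl(N_f(r,D_i)-N_f^N(r,D_i)\bigr)$, Theorem \ref{Th10} gives $\bigl(n-(d+N+1)N\bigr)T_f(r)\le N_f^N(r,D)+o(T_f(r))$ and the analogous inequality for $g$. Inserting the bound from the second step and adding the two inequalities yields
\begin{equation*}
\bigl(n-N(d+N+3)\bigr)\bigl(T_f(r)+T_g(r)\bigr)\le o\bigl(T_f(r)+T_g(r)\bigr),
\end{equation*}
where I have used $n-(d+N+1)N-2N=n-N(d+N+3)$. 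As $f$ and $g$ are algebraically nondegenerate they are nonconstant, so $T_f(r)+T_g(r)\to\infty$; since $n>N(d+N+3)$ makes the coefficient on the left strictly positive, letting $r\to\infty$ (outside the finite exceptional set) produces a contradiction. Hence $f\equiv g$.

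The routine combinatorics aside, the step I expect to require the most care is the second one: one must verify, with the $\Omega$-characteristic defined through the extra boundary integrals over the circles $c_j+\tfrac1r e^{i\theta}$, both the first-main-theorem estimate $N(r,1/h)\le T_h(r)+O(1)$ for the scalar function $h$ and the growth comparison $T_h(r)\le T_f(r)+T_g(r)+O(1)$, so that the behaviour of $h$ is correctly controlled and nothing is lost to the singular terms. In the present corollary this is automatic because $f$ and $g$ are entire, but it is exactly the point one must check to obtain Theorem \ref{Th6} in full generality; one should also note that the two finite exceptional sets coming from the two uses of Theorem \ref{Th10} may be combined without harm.
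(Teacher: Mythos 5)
Your proof is correct and takes essentially the same approach as the paper: the corollary is obtained there by specializing Theorem \ref{Th6}, whose proof is exactly your argument --- contradiction via a nonvanishing $2\times 2$ minor, the bound $N_f^{N}(r,D)\le N\,N_f^{1}(r,D)\le N(T_f(r)+T_g(r))+O(1)$, and two applications of Theorem \ref{Th10}. The only cosmetic difference is that the paper counts zeros of the quotient $\tfrac{f_{\alpha}}{f_{\beta}}-\tfrac{g_{\alpha}}{g_{\beta}}$ where you use the entire minor $f_ig_j-f_jg_i$; both yield the same estimate.
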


We reduce the number hypersurfaces in before results. The authors \cite{DT, DR, P} studied the uniqueness problem 
with a number lager hypersurfaces. Here, we only need a hypersurface.

%\noindent{\bf Acknowledgements:} The work has been done under the financial support provided by Vietnam's National 
%Foundation for Science and Technology Development (NAFOSTED).

\section{Some preliminaries in Nevanlinna theory for meromorphic functions}
\def\theequation{2.\arabic{equation}}
\setcounter{equation}{0} 
In order to prove  theorems, we need the following lemmas. Let $f$ be a meromorphic function on $M-$ punctured $\Omega$ and 
$r \in (r_0, +\infty)$, we recall that
\begin{align*}
m_0(r, f)&=\dfrac{1}{2\pi}\int_{0}^{2\pi}\log^{+}|f(re^{i\theta})|d\theta+\dfrac{1}{2\pi}\sum_{j=1}^{M}\int_{0}^{2\pi}
\log^{+}|f(c_j+\dfrac{1}{r}e^{i\theta})|d\theta\\
&-\dfrac{1}{2\pi}\int_{0}^{2\pi}\log^{+}|f(r_0e^{i\theta})|-\dfrac{1}{2\pi}\sum_{j=1}^{M}\int_{0}^{2\pi}
\log^{+}|f(c_j+\dfrac{1}{r_0}e^{i\theta})|d\theta.
\end{align*}
We denote $n_0(r, f)$ by the numbers of its poles in $\overline \Omega_r.$ The counting function $N_0(r, f)$ of $f$ is defined by
$$ N_0(r, f)=\dfrac{1}{2\pi}\int_{r_0}^{r}\dfrac{n_0(t, f)}{t}dt. $$
The function
$$ T_0(r, f)=N_0(r, f)+m_0(r, f)$$
is called {\it the Nevanlinna characteristic of $f$.}

\begin{lemma}\label{lm21}\cite{KH} (Jensen's Theorem for $M-$ punctured planes)
Let $f$ be a  non-constant meromorphic 
function in an $M-$ punctured plane $\Omega$ not identically equal to zero and let $r_0<r<+\infty$. Then 

\begin{align*}
N_0\bigg(r,\dfrac{1}{f}\bigg) - N_0(r,f)& = \dfrac{1}{2\pi} \int_0^{2\pi} \log |f(re^{i\theta})| d\theta + 
\dfrac{1}{2\pi}\sum_{j=1}^{M} \int_0^{2\pi} \log |f(c_j+\dfrac{1}{r}e^{i\theta})| d\theta \\
&- \dfrac{1}{2\pi} \int_0^{2\pi} \log |f(r_0e^{i\theta})| d\theta-
\dfrac{1}{2\pi}\sum_{j=1}^{M} \int_0^{2\pi} \log |f(c_j+\dfrac{1}{r_0}e^{i\theta})| d\theta.
\end{align*}
\end{lemma}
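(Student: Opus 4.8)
The plan is to treat $u=\log|f|$ as a function that is harmonic on $\Omega$ away from the zeros and poles of $f$, and to apply the Gauss--Green (divergence) theorem on the domain $\Omega_t = D_t(0)\setminus\bigcup_{j=1}^M\overline D_{1/t}(c_j)$, whose boundary consists of the outer circle $|z|=t$ together with the $M$ inner circles $|z-c_j|=1/t$. Since $\overline\Omega_t\subset\Omega$ stays away from the points $c_j$, the function $f$ is genuinely meromorphic in a neighbourhood of $\overline\Omega_t$, so the essential singularities play no role inside $\Omega_t$ and the only distributional mass of $\Delta u$ comes from the zeros and poles: $\Delta\log|f| = 2\pi\big(\sum_{\text{zeros}} m_a\delta_a-\sum_{\text{poles}} m_b\delta_b\big)$. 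I introduce the total boundary mean
$$\Phi(t)=\frac{1}{2\pi}\int_0^{2\pi}\log|f(te^{i\theta})|\,d\theta+\frac{1}{2\pi}\sum_{j=1}^M\int_0^{2\pi}\log\Big|f\big(c_j+\tfrac1t e^{i\theta}\big)\Big|\,d\theta,$$
so that the right-hand side of the asserted identity is exactly $\Phi(r)-\Phi(r_0)$.

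First I would compute $\int_{\partial\Omega_t}\partial_n u\,ds$, the flux of $\nabla u$ through the boundary. On the outer circle the outward normal is the radial direction, and parametrizing $z=te^{i\theta}$ gives $\int_{|z|=t}\partial_n u\,ds = t\,\frac{d}{dt}\int_0^{2\pi}\log|f(te^{i\theta})|\,d\theta$. On each inner circle the outward normal of $\Omega_t$ points \emph{into} the excised disk, i.e.\ toward $c_j$; writing $\rho_j=|z-c_j|$ and applying the chain rule for $s=1/t$ (so that $ds/dt=-1/t^2$), the two sign reversals cancel and one obtains $\int_{|z-c_j|=1/t}\partial_n u\,ds = t\,\frac{d}{dt}\int_0^{2\pi}\log|f(c_j+\tfrac1t e^{i\theta})|\,d\theta$. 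Summing, the total flux equals $2\pi t\,\Phi'(t)$.

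On the other side, the divergence theorem evaluates the flux as $\int_{\Omega_t}\Delta u\,dA = 2\pi\big(n_0(t,1/f)-n_0(t,f)\big)$, the excess of zeros over poles in $\overline\Omega_t$ counted with multiplicity. To make this rigorous in the presence of the logarithmic singularities of $u$, I would excise small disks of radius $\varepsilon$ about each zero and pole, apply Green's theorem on the now regular, harmonic punctured region, and let $\varepsilon\to0$: each excised zero contributes $-2\pi m_a$ and each pole $+2\pi m_b$ to the interior flux, reproducing the stated count. Equating the two expressions for the flux gives, for almost every $t$, the differential relation $\Phi'(t)=\big(n_0(t,1/f)-n_0(t,f)\big)/t$. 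Integrating from $r_0$ to $r$ and recognizing the right-hand side as $N_0(r,1/f)-N_0(r,f)$ yields the lemma.

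The step I expect to be the main obstacle is the careful bookkeeping on the inner circles: getting the orientation of the outward normal and the substitution $s=1/t$ exactly right so that the inner contributions enter with the \emph{same} sign as the outer one (rather than cancelling it), which is precisely what makes the extra ``essential-singularity'' terms appear additively in $\Phi$, in keeping with the definition of $T_f(r)$. A secondary technical point is that the boundary circles must avoid the zeros and poles of $f$; this holds for all but countably many $t$, and the identity then extends to all $r\in(r_0,+\infty)$ because both sides are absolutely continuous in $r$. Finally I would double-check the normalizing constant, since the divergence computation produces a factor $2\pi$, so the convention for $N_0$ must be fixed consistently for the two sides to match.
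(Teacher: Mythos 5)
The paper does not prove this lemma at all --- it is quoted verbatim from Hanyak--Kondratyuk \cite{KH} --- so there is no in-paper argument to compare against; your proposal supplies a genuine, self-contained proof, and it is correct. It is the classical Green's-theorem derivation of Jensen's formula adapted to the multiply connected domain $\Omega_t$: the flux computation on the outer circle is standard, and your bookkeeping on the inner circles is right --- the outward normal of $\Omega_t$ points toward $c_j$, giving $-\rho\,g_j'(\rho)$ at $\rho=1/t$, and the substitution $\rho=1/t$ contributes the second minus sign, so the inner boundary terms indeed enter $\Phi$ with the same sign as the outer one; this is exactly why the definitions of $T_f$ and $m_f$ in the paper carry additive (not subtracted) contributions from the circles $|z-c_j|=1/r$. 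The excision argument for the logarithmic singularities, the remark that only finitely many radii in $(r_0,r]$ put a zero or pole on $\partial\Omega_t$, and the continuity of $\Phi$ across those radii (local integrability of $\log|f|$) are all that is needed to pass from the pointwise identity $t\Phi'(t)=n_0(t,1/f)-n_0(t,f)$ to the integrated statement. Your final caution about the normalization is well taken: with the paper's Section~2 definition $N_0(r,f)=\tfrac{1}{2\pi}\int_{r_0}^{r}n_0(t,f)\,t^{-1}dt$ the identity would be off by a factor of $2\pi$; that stray $\tfrac{1}{2\pi}$ is evidently a typo (it is absent from the analogous definitions of $N_f(r,H)$ in Section~1 and from \cite{KH}), and your derivation shows the correct convention is $N_0(r,f)=\int_{r_0}^{r}n_0(t,f)\,t^{-1}dt$.
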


\begin{lemma}\label{lm22}\cite{KH} Let $f$ be a  non-constant meromorphic function on $\Omega.$ Then, we have the 
equality 
$$\| \quad m_{0}(r,\dfrac{f'}{f})=O(\log r+\log T_{0}(r,f)),$$ 
holds for all $r \in (r_0, +\infty)$ outside a set of finite measure.
\end{lemma}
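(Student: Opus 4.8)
The plan is to prove Lemma \ref{lm22}, the Logarithmic Derivative Lemma on $M$-punctured planes, by reducing it to the classical Logarithmic Derivative Lemma on disks (or on annuli) via the structure of the characteristic function $T_0(r,f)$. First I would observe that the proximity function $m_0(r, f'/f)$ decomposes as a sum of $2M+2$ boundary integrals: the integral over the outer circle $|z|=r$, the integral over the outer circle $|z|=r_0$, and the $M$ pairs of integrals over the small circles $|z-c_j|=1/r$ and $|z-c_j|=1/r_0$. The term over $|z|=r$ is exactly the quantity controlled by the classical lemma on the disk $D_r(0)$, giving a bound $O(\log r + \log T(r,f))$ where $T(r,f)$ is the usual Nevanlinna characteristic on that disk. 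The subtracted $r_0$ terms are $O(1)$ since $r_0$ is fixed.

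The key technical step is to handle the integrals near each essential singularity $c_j$. Here I would use the change of variables $w = 1/(z-c_j)$, which maps the punctured neighborhood of $c_j$ to a neighborhood of infinity and sends the circle $|z-c_j|=1/r$ to $|w|=r$. Under this substitution the function $g_j(w) := f(c_j + 1/w)$ is meromorphic, and a direct computation shows that $f'(z)/f(z)$ transforms into $-w^2 \cdot g_j'(w)/g_j(w)$, so the logarithmic derivative of $f$ pulls back to that of $g_j$ up to an elementary factor $w^2$ whose contribution to the proximity integral is only $O(\log r)$. Applying the classical Logarithmic Derivative Lemma to each $g_j$ on disks of radius $r$ then yields a bound of the form $O(\log r + \log T(r, g_j))$ for each of the $M$ singular pieces.

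The main obstacle, and the step requiring the most care, is bookkeeping the relationship between the local characteristics $T(r,f)$ and $T(r,g_j)$ appearing after these reductions and the global punctured characteristic $T_0(r,f)$ defined in the excerpt. I would need to show that each of these local quantities is bounded above by $T_0(r,f)$ up to an additive $O(\log r)$, using Jensen's Theorem for $M$-punctured planes (Lemma \ref{lm21}) to relate the boundary integrals and counting functions; this is what lets me replace each $\log T(r,g_j)$ and $\log T(r,f)$ by $\log T_0(r,f)$ in the final estimate. Summing the outer contribution and the $M$ singular contributions, absorbing all fixed-$r_0$ terms into $O(1)$, and collecting the exceptional sets (each classical application introduces an exceptional set of finite measure, and a finite union of such sets still has finite measure, which is exactly what the symbol $\|$ records) would complete the proof.
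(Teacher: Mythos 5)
The paper offers no proof of Lemma \ref{lm22}: it is quoted verbatim from Hanyak--Kondratyuk \cite{KH}, so there is no in-paper argument to measure yours against. Judged on its own, your sketch has the right outer shape (split $m_0(r,f'/f)$ into the integral over $|z|=r$ and the $M$ integrals over $|z-c_j|=1/r$, absorb the fixed $r_0$ terms into $O(1)$, use the substitution $w=1/(z-c_j)$ and note that the Jacobian factor $w^2$ costs only $O(\log r)$), but the central step fails as written. You propose to apply the classical Logarithmic Derivative Lemma to $f$ on the disk $D_r(0)$ and to $g_j(w)=f(c_j+1/w)$ on the disk $|w|\le r$. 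Neither function is meromorphic on the corresponding disk: $f$ has essential singularities at $c_1,\dots,c_M\in D_{r_0}(0)\subset D_r(0)$, and $g_j$ has essential singularities at the points $w=1/(c_k-c_j)$ for $k\ne j$, which satisfy $|w|\le 1/(2d)<r_0<r$ (and possibly another at $w=0$, coming from $z=\infty$). Since $M\ge 2$, at least one such interior singularity is always present. The disk lemma is proved from the Poisson--Jensen formula on the disk and genuinely requires meromorphy throughout it, so it applies to neither piece; this is exactly the obstruction that makes the punctured-plane statement a theorem in its own right rather than a corollary of the classical one.

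What is missing is a device producing functions to which a known logarithmic-derivative estimate actually applies. Two standard repairs: (i) invoke the annulus version of the lemma \cite{KK,KK1} on the one-ended annuli $\{\rho<|z|<\infty\}$ (for the outer circle) and $\{0<|z-c_j|<d\}$ (for each puncture), and then compare the resulting annulus characteristics with $T_0(r,f)$; or (ii) first decompose $f$ multiplicatively as $f=f_0f_1\cdots f_M$ with $f_0$ meromorphic on $\C$ and each $f_j$ ($j\ge 1$) meromorphic on $\overline{\C}\setminus\{c_j\}$, so that $f'/f=\sum_{j}f_j'/f_j$ and the classical disk lemma applies to each factor after a M\"obius change of variable; a structural decomposition of this kind is the essential input in \cite{KH} and is absent from your sketch. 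Your final bookkeeping — bounding each local characteristic by $T_0(r,f)$ up to $O(\log r)$ via Jensen's theorem and taking a finite union of exceptional sets of finite measure — is reasonable once one of these repairs is in place, but without it the argument does not go through.
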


Let $X\subset \PP^N(\C)$ be a projective variety of dimension $n$ and degree $\Delta.$  Let $I_X$ be the prime ideal in 
$\C[x_0, \dots, x_N]$ defining $X$. Denote by $\C[x_0, \dots, x_N]_m$ the vector space of homogeneous
polynomials in $\C[x_0, \dots, x_N]$ of degree $m$ (including 0). Put $I_X(m):=\C[x_0, \dots, x_N]_m\cap I_X.$ The Hilbert
 function $H_X$ of $X$ is defined by
$$ H_X(m):=\dim \C[x_0, \dots, x_N]_m / I_X(m).$$
For each tuple $c=(c_0, \dots, c_N) \in \R^{N+1}_{\ge 0}$ and  $m\in \N$, we define the $m$-th Hilbert weight
$S_X(m, c)$ of $X$ with respect to $c$ by
$$ S_X(m, c):=\max \sum_{i=1}^{H_X(m)}I_i.c,$$
where $I_i=(I_{i0}, \dots, I_{iN}) \in  \N_{0}^{N+1}$ and the maximum is taken over all sets $\{x^{I_i}=x_0^{T_{i0}}\dots x_N^{I_{iN}}\}$
whose residue classes modulo $I_X(m)$ form a basis of the vector space  $C[x_0, \dots, x_N]_m/I_X(m).$

\begin{lemma}\label{lm23}\cite{Ru3}  
Let $X\subset \PP^{N}(\C)$ be an algebraic variety of dimension $n$ and degree $\Delta$. Let $m>\Delta$ be an integer and let 
$c=(c_0, \dots, c_N) \in \R_{\ge 0}^{N+1}.$  Then
$$ \dfrac{1}{mH_X(m)}S_X(m, c)\ge \dfrac{1}{(n+1)\Delta}e_X(c)-\dfrac{(2n+1)\Delta}{m}\max_{0\le i\le N}c_i.$$
\end{lemma}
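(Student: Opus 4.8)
The plan is to derive the inequality from the comparison between the weighted Hilbert function of $X$ and its Chow form, rendered effective in the degree $m$.

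First I would reduce to the case $c=(c_0,\dots,c_N)\in\N^{N+1}$. For fixed $m$ the quantity $S_X(m,c)$ is a maximum of finitely many linear forms in $c$ (one for each admissible monomial basis, and the set of admissible bases does not depend on $c$), hence it is continuous and positively homogeneous of degree one; the Chow weight $e_X(c)$ shares these properties, and $\max_i c_i$ is homogeneous of degree one as well, so both sides of the asserted inequality scale the same way under $c\mapsto\lambda c$. It therefore suffices to treat rational $c$, and after clearing denominators, integral $c$. For such $c$ one organizes the degree-$m$ monomials by their $c$-weight: let $W_{\ge t}$ be the span in $\C[x_0,\dots,x_N]_m$ of the monomials $x^{I}$ with $I\cdot c\ge t$, and let $\overline W_{\ge t}$ be its image in $\C[x_0,\dots,x_N]_m/I_X(m)$. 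These images form a decreasing filtration, and a greedy choice of a monomial basis attaining $S_X(m,c)$ — picking monomials of largest weight first and keeping those whose residues stay linearly independent modulo $I_X(m)$ — is a maximum-weight basis of the associated matroid and yields the clean identity $S_X(m,c)=\sum_{t\ge 1}\dim\overline W_{\ge t}$ by Abel summation over the thresholds $t$.

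Next I would identify the leading term of this sum with the Chow weight. By Mumford's theory relating one-parameter degenerations to the Chow form, the weighted and unweighted Hilbert functions satisfy, with the normalization making $e_X$ the Chow weight, $S_X(m,c)=\frac{e_X(c)}{(n+1)!}m^{n+1}+O(m^{n})$ and $H_X(m)=\frac{\Delta}{n!}m^{n}+O(m^{n-1})$. Dividing the first by $m$ times the second and letting $m\to\infty$ produces exactly $\frac{e_X(c)}{(n+1)\Delta}$, the main term on the right-hand side. Hence the content of the lemma is an \emph{effective} lower bound for $\dfrac{S_X(m,c)}{mH_X(m)}$ carrying an explicit error of order $1/m$.

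The main obstacle is to make both asymptotics quantitative and to extract the stated constant. Here I would bound each filtered dimension $\dim\overline W_{\ge t}$ from below by Hilbert functions of appropriate sections of $X$, so that the leading behavior of $\sum_{t\ge 1}\dim\overline W_{\ge t}$ reproduces $e_X(c)$. Two discrepancies must then be controlled uniformly: the deviation of this sum from its leading term, and the gap $H_X(m)-\frac{\Delta}{n!}m^{n}$ between the Hilbert function and its leading term. Because $X$ has dimension $n$ and degree $\Delta$, each section lowers the relevant dimension in a controlled way, and each weight factor contributes at most $\max_i c_i$; tracking these bounds through the $n+1$ groups of variables of the Chow form is exactly what produces the coefficient $(2n+1)\Delta$ in the error term $\frac{(2n+1)\Delta}{m}\max_{0\le i\le N}c_i$. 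This effective comparison is the delicate quantitative step, and it is the one carried out in the work of Evertse and Ferretti from which the inequality follows.
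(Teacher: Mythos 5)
The paper offers no proof of this lemma to compare against: it is quoted verbatim from Ru's Annals paper \cite{Ru3}, where it is in turn taken from the work of Evertse and Ferretti on the effective comparison of Hilbert weights and Chow weights. So the only question is whether your sketch would stand on its own as a proof, and it would not quite. The preparatory steps are sound: the reduction to integral $c$ by homogeneity and continuity of $S_X(m,\cdot)$ (a maximum of finitely many linear forms) and of $e_X$, the filtration of $\C[x_0,\dots,x_N]_m$ by weight with the greedy/matroid identity $S_X(m,c)=\sum_{t\ge 1}\dim\overline W_{\ge t}$, and Mumford's identification of the leading term $\frac{e_X(c)}{(n+1)!}m^{n+1}$ are all correct and are indeed the standard architecture of the argument.

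The gap is at the decisive step. The lemma is not the asymptotic statement $\lim_{m\to\infty}\frac{S_X(m,c)}{mH_X(m)}=\frac{e_X(c)}{(n+1)\Delta}$, which your outline does essentially establish; it is the claim that for \emph{every} $m>\Delta$ the deviation is bounded by $\frac{(2n+1)\Delta}{m}\max_i c_i$, uniformly in $X$ and $c$. Your paragraph addressing this says that bounding each $\dim\overline W_{\ge t}$ by Hilbert functions of sections and ``tracking these bounds'' produces the coefficient $(2n+1)\Delta$, but no such tracking is performed: there is no argument showing why the constant is $(2n+1)\Delta$ rather than something else, why $m>\Delta$ suffices, or how the two error sources (the deviation of $\sum_t\dim\overline W_{\ge t}$ from its leading term and the gap $H_X(m)-\frac{\Delta}{n!}m^n$, which need not even have a favorable sign) combine into a one-sided bound. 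You then state explicitly that this step ``is the one carried out in the work of Evertse and Ferretti,'' which is an appeal to the literature, not a proof. Since that quantitative step is the entire content of the lemma beyond the soft asymptotics, the proposal as written is a correct roadmap to the known proof rather than a proof.
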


\begin{lemma}\label{lm24}\cite{Ru3} Let $Y$ be a subvariety of $\PP^{q-1}(\C)$ of dimension $n$ and degree $\Delta.$ Let
$c=(c_1, \dots, c_q)$ be a tupe of positive reals. Let $\{i_0, \dots, i_n\}$ be a subset of
$\{1, \dots , q \}$ such that $\{ y_{i_0}=\dots=y_{i_n}=0\}\cap Y = \emptyset$.  Then
$$ e_Y(c) \ge (c_{i_0}+\dots c_{i_n})\Delta. $$
\end{lemma}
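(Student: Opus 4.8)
The plan is to deduce the bound from the explicit shape of the Chow form of $Y$: I will locate one monomial in that form whose $c$-weight is already $\Delta(c_{i_0}+\dots+c_{i_n})$, and then the inequality follows because $e_Y(c)$ is by definition the maximum $c$-weight over all monomials occurring in the Chow form.

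First I recall the set-up. Let $F_Y=F_Y(\mathbf u_0,\dots,\mathbf u_n)$ denote the Chow (Cayley) form of $Y$; it is a polynomial in $n+1$ blocks of variables $\mathbf u_j=(u_{j1},\dots,u_{jq})$, $0\le j\le n$, homogeneous of degree $\Delta$ in each block, and its defining property is that $F_Y(\mathbf u_0,\dots,\mathbf u_n)=0$ precisely when the hyperplanes $H_j=\{y:\sum_{k=1}^q u_{jk}y_k=0\}$ satisfy $Y\cap H_0\cap\dots\cap H_n\ne\emptyset$. Writing $F_Y=\sum_M\gamma_M\prod_{j,k}u_{jk}^{a_{jk}(M)}$ with $\sum_k a_{jk}(M)=\Delta$ for each $j$ (multihomogeneity), the Chow weight is $e_Y(c)=\max_{\gamma_M\ne0}\sum_{j,k}a_{jk}(M)c_k$.

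The key step is a specialization at coordinate covectors. For each $j$ set $u_{jk}=\delta_{k,i_j}$, so that $H_j=\{y_{i_j}=0\}$ and $H_0\cap\dots\cap H_n=\{y_{i_0}=\dots=y_{i_n}=0\}$; by hypothesis this meets $Y$ in the empty set, so the defining property of the Chow form forces $F_Y$ to be nonzero at this specialization. On the other hand, the substitution $u_{jk}=\delta_{k,i_j}$ kills every monomial except those involving only the variables $u_{0,i_0},\dots,u_{n,i_n}$, and by block-homogeneity of degree $\Delta$ the single such monomial is $M_0=\prod_{j=0}^n u_{j,i_j}^{\Delta}$. Hence the nonvanishing of the specialized value is exactly the statement that the coefficient $\gamma_{M_0}$ is nonzero, i.e. $M_0$ really occurs in $F_Y$.

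It then remains only to read off the weight: the $c$-weight of $M_0$ equals $\sum_{j=0}^n\Delta\,c_{i_j}=\Delta(c_{i_0}+\dots+c_{i_n})$, and since $e_Y(c)$ is the maximum of the $c$-weights of the monomials of $F_Y$ we conclude $e_Y(c)\ge\Delta(c_{i_0}+\dots+c_{i_n})$. The argument needs no integrality of $c$, because $e_Y(c)$ is the maximum of finitely many linear functions of $c$. I expect the only delicate point to be ensuring that the vanishing criterion for the Chow form applies at the coordinate-covector tuple and not merely at generic tuples of hyperplanes; this is where I would be careful, but it holds since the criterion is valid for every tuple of $n+1$ hyperplanes. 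The bookkeeping that the coordinate specialization isolates the unique monomial $M_0$ is then routine.
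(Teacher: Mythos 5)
Your proof is correct, and it coincides with the proof in the literature: the paper states this lemma without proof, citing \cite{Ru3}, where (following Evertse--Ferretti) the argument is exactly yours --- specialize the Chow form at the coordinate covectors $u_{jk}=\delta_{k,i_j}$, use $\{y_{i_0}=\dots=y_{i_n}=0\}\cap Y=\emptyset$ to force nonvanishing, observe that multihomogeneity of degree $\Delta$ in each block leaves only the monomial $\prod_{j=0}^{n}u_{j,i_j}^{\Delta}$, and read off its $c$-weight $\Delta(c_{i_0}+\dots+c_{i_n})$ against the definition of $e_Y(c)$ as the maximal $c$-weight of a monomial of the Chow form. The point you flag is indeed harmless: the vanishing criterion for the Chow form characterizes \emph{every} tuple of $n+1$ hyperplanes whose intersection meets $Y$, not merely generic ones, so the specialization step is legitimate.
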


\section{Proof of Theorems}
\def\theequation{3.\arabic{equation}}
\setcounter{equation}{0}

\begin{proof}[Proof of Theorem 1]
The first, we note that $N_0(r, Q(f))=0.$ By the definitions of $T_{f}(r)$, $N_{f}(r,H)$, $ m_{f}(r,H)$ and apply to Lemma \ref{lm21}
for $Q(f) \not\equiv 0$, we have
\begin{align*}
N_{f}(r, D)&=N_{0}(r, \dfrac{1}{Q(f)})\\
&= \dfrac{1}{2\pi} \int_0^{2\pi} \log |Q(f)(re^{i\theta})| d\theta + 
\dfrac{1}{2\pi}\sum_{j=1}^{M} \int_0^{2\pi} \log |Q(f)(c_j+\dfrac{1}{r}e^{i\theta})| d\theta +O(1).
\end{align*}
Hence, we get
\begin{align*}
N_{f}(r,D)&+m_{f}(r,D) \\
&=\dfrac{1}{2\pi}\int_{0}^{2\pi} \log \dfrac{\|f(re^{i\theta})\|^{d}}{|Q(f)(re^{i\theta})|}d\theta+ 
\dfrac{1}{2\pi}\sum_{j=1}^{M}\int_{0}^{2\pi} \log \dfrac{\|f(c_j+\dfrac{1}{r}e^{i\theta})\|^{d}}
{|Q(f)(c_j+\dfrac{1}{r}e^{i\theta})|}d\theta \\
&\quad+\dfrac{1}{2\pi}\int_{0}^{2\pi} \log |Q(f)(re^{i\theta})| d\theta +\dfrac{1}{2\pi}\sum_{j=1}^{M}\int_{0}^{2\pi}
 \log |Q(f)(c_j+\dfrac{1}{r}e^{i\theta})|d\theta +O(1)\\
&= \dfrac{1}{2\pi}\int_{0}^{2\pi} \log \|f(re^{i\theta})\|^{d}d\theta+ \dfrac{1}{2\pi}\sum_{j=1}^{M}\int_{0}^{2\pi} 
\log \|f(c_j+\dfrac{1}{r}e^{i\theta})\|^{d} d\theta +O(1) \\
&=  dT_f(r)+O(1).
\end{align*}
This is conclusion of Theorem \ref{Th1}.
\end{proof}

\begin{proof}[Proof of Theorem \ref{Th2}]
To prove the Theorem \ref{Th2}, we need some lemmas. First we recall the property of Wronskian. Let $f=(f_0:\dots:f_n): \Omega \ax \PP^n(\C)$ be holomorphic curves, the determinant of Wronskian of $f$ is defined by 
\[ W = W(f) = W(f_0,\dots,f_n) =
\left|\begin{array}{cccc}
f_{0}(z)&f_{1}(z)&\cdot & f_{n}(z)\\
f'_{0}(z)&f'_{1}(z)&\cdot & f'_{n}(z)\\
\vdots&\vdots&\ddots&\vdots\\
f^{(n)}_{0}(z)&f^{(n)}_{1}(z)&\cdot & f^{(n)}_{n}(z)\\
\end{array}
\right|.
\]
We denote by $N_W(r,0)$ the counting function of zeros of $W(f_0,\dots,f_n)$ in $\overline \Omega_r$, namely
$$N_W(r,0) = N_0(r,\dfrac{1}{W})+O(1).$$

Let $L_0,\dots,L_{n}$ are linearly independent forms  of $z_0,\dots,z_n$. For $j=0,\dots,n$, set $$F_j(z):= L_j(f(z)).$$ By the 
property of Wronskian there exists a constant $C \ne 0$ such that
$$|W(F_0,\dots,F_n)| =C|W(f_{0},\dots ,f_{n})|.$$

\begin{lemma}\label{lm31}
Let $ f=(f_{0}:\dots:f_{n}) : \Omega \ax \PP^{n}(\C)$  be a linearly non-degenerate holomorphic 
curve  and $H_{1},\dots , H_{q}$ be hyperplanes in $\PP^{n}(\C)$ in general position. Then we have the inequality
\begin{align*}
\| \quad \int_{0}^{2\pi}\max_{K}\sum_{l \in K}\log \dfrac {\|f(re^{i\theta})\|}{|(a_l,f)(re^{i\theta})|}& 
\dfrac{d\theta}{2\pi}
+\sum_{j=1}^{M}\int_{0}^{2\pi}\max_{K}\sum_{l \in K}\log \dfrac {\|f(c_j+\dfrac{1}{r}e^{i\theta})\|}{|(a_l,f)(c_j+
\dfrac{1}{r}e^{i\theta})|} \dfrac{d\theta}{2\pi} \\
&\leqslant (n+1)T_{f}(r) - N_{W}(r,0) +O(\log r+ \log T_{f}(r)).
\end{align*}
Here the maximum is taken over all subsets $K$ of $\{1,\dots,q\}$ such that $a_{l}$, $l\in K$, are linearly independent.
\end{lemma}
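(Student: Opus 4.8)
The plan is to reduce the statement to the classical Cartan lemma on $\C$ applied on the two pieces of the boundary of $\Omega_r$ (the outer circle $|z|=r$ and the $M$ inner circles $|z-c_j|=1/r$), and then assemble the pieces via the definition of $T_f(r)$ and the logarithmic derivative estimate of Lemma \ref{lm22}. First I would fix a subset $K\subset\{1,\dots,q\}$ with $\{a_l:l\in K\}$ linearly independent; since $f$ is linearly nondegenerate we may assume $|K|=n+1$ and, after a linear change of coordinates, that the forms $(a_l,f)$ for $l\in K$ are (up to constant) the coordinate functions $f_0,\dots,f_n$. The key algebraic identity is that for each such $K$,
$$
\sum_{l\in K}\log\frac{\|f\|}{|(a_l,f)|}
=\log\frac{\|f\|^{\,n+1}}{\prod_{l\in K}|(a_l,f)|}
=\log\frac{\|f\|^{\,n+1}\,|W(f)|}{|W(F_0,\dots,F_n)|}+O(1),
$$
using that $|W(F_0,\dots,F_n)|=C|W(f_0,\dots,f_n)|$ for the forms indexed by $K$.

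Next I would bound $\max_K\sum_{l\in K}\log(\|f\|/|(a_l,f)|)$ pointwise by the single expression $\log\big(\|f\|^{\,n+1}|W(f)|/|W(F_0,\dots,F_n)|\big)$ plus a term involving logarithmic derivatives; the standard trick is to write each $F_l'/F_l$ factor and regroup the Wronskian $W(F_0,\dots,F_n)$ as $\prod F_l$ times a determinant built from $F_l^{(k)}/F_l$. This yields, after taking $\log$, a bound by $\sum$ of $\log^+|$ (iterated logarithmic derivatives) $|$ plus $(n+1)\log\|f\|-\log|W(f)|$, uniformly in $K$ up to an $O(1)$. I would then integrate this pointwise inequality over $\theta\in[0,2\pi]$ on the outer circle of radius $r$ and over each of the $M$ inner circles of radius $1/r$ about $c_j$, and divide by $2\pi$; summing these $M+1$ contributions is exactly how $T_f(r)$ is defined in this paper.

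The three resulting groups of terms are handled as follows. The terms $(n+1)\cdot\frac1{2\pi}\big(\int\log\|f\|+\sum_j\int\log\|f\|\big)$ combine precisely into $(n+1)T_f(r)$. The terms $-\frac1{2\pi}\big(\int\log|W(f)|+\sum_j\int\log|W(f)|\big)$ combine, via the Jensen-type formula of Lemma \ref{lm21} applied to the meromorphic function $W(f)$ (which, with a reduced representation of $f$, has no poles in $\Omega$), into $-N_W(r,0)+O(1)$. Finally the logarithmic-derivative terms $\frac1{2\pi}\sum_{k,l}\int\log^+|F_l^{(k)}/F_l|$ over all $M+1$ circles are controlled by the $M$-punctured-plane logarithmic derivative lemma, Lemma \ref{lm22}: each such integral is $m_0(r,F_l^{(k)}/F_l)$ up to $O(1)$ and is therefore $O(\log r+\log T_0(r,F_l))=O(\log r+\log T_f(r))$ outside a set of finite measure. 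Since there are only finitely many choices of $K$, the union of these exceptional sets still has finite measure, so the $``\|"$ estimate holds.

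The main obstacle I expect is making the pointwise majorization of $\max_K(\cdots)$ by a single Wronskian expression fully rigorous and uniform in $K$, together with tracking the $O(1)$ constants $C$ that depend on $K$; one must also justify that the logarithmic derivative lemma of Kondratyuk–Hanyak (Lemma \ref{lm22}), which is stated for a single meromorphic function on $\Omega$, applies simultaneously to the finitely many quotients $F_l^{(k)}/F_l$ and on all $M+1$ boundary circles at once. Because the definition of $T_f(r)$ and of $N_0$ in this paper already bundles the outer circle and the inner circles symmetrically, the bookkeeping of the three term-groups is routine once the pointwise Cartan inequality and the logarithmic derivative bound are in place; the essential-singularity contributions from the inner circles require no new idea beyond applying each classical estimate circle-by-circle and summing.
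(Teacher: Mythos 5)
Your proposal follows essentially the same route as the paper's proof: Cartan's pointwise Wronskian decomposition, integration over the outer circle and the $M$ inner circles so that the terms reassemble into $(n+1)T_f(r)$, Jensen's formula (Lemma \ref{lm21}) applied to $W(f)$ to produce $-N_W(r,0)$, and the $M$-punctured logarithmic derivative lemma (Lemma \ref{lm22}) for the error term, with the finitely many exceptional sets united at the end. Two details are worth tightening. First, your displayed ``key algebraic identity'' is mis-stated: since $|W(F_0,\dots,F_n)|=C|W(f)|$, the right-hand side $\log\bigl(\|f\|^{n+1}|W(f)|/|W(F_0,\dots,F_n)|\bigr)$ collapses to $\log\|f\|^{n+1}+O(1)$; what you need (and what your subsequent prose describes) is the factorization $\log\dfrac{\|f\|^{n+1}}{\prod_{l\in K}|(a_l,f)|}=\log\dfrac{\|f\|^{n+1}}{|W(f)|}+\log\dfrac{|W(F_0,\dots,F_n)|}{\prod_{l\in K}|(a_l,f)|}+O(1)$. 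Second, the paper deliberately applies Lemma \ref{lm22} not to $F_l^{(k)}/F_l$ but to the quotients $g_{\mu(j)}=(a_{\mu(j)},f)/(a_{\mu(0)},f)$, because for these one can prove $T_0(r,g_{\mu(j)})\le T_f(r)+O(1)$ directly from the definition of $T_f$; your claim $T_0(r,F_l)=O(T_f(r))$ is not immediate, since $m_0(r,F_l)$ is built from $\log^+|F_l|$ while $T_f(r)$ integrates $\log\|f\|$ without truncation and $F_l$ depends on the chosen reduced representation, so either adopt the paper's normalization by $(a_{\mu(0)},f)$ or supply the extra estimate relating $\int\log^+\|f\|$ to $T_f(r)$.
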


%\begin{proof}
First, we prove 
\begin{align}\label{ct3.1}
\| \quad \int_{0}^{2\pi}\max_{K}\sum_{l\in K}&\log \dfrac {\|f(re^{i\theta})\|}{|(a_j,f)(re^{i\theta})|} \dfrac{d\theta}{2\pi}+\dfrac{1}{2\pi} \int_{0}^{2\pi}\log |W(f)(re^{i\theta})| d\theta\\
& \leqslant (n+1)\dfrac{1}{2\pi}\int_{0}^{2\pi} \log \|f(re^{i\theta})\| d\theta +O(\log r+ \log T_{f}(r)), \notag
\end{align}
holds for any $r \in (r_0, +\infty)$. Let $K \subset \{1,\dots,q\}$ such that $a_l, l \in K,$ are linearly independent. Without loss of 
generality, we may assume that $ q\geqslant n+1$ and $\# K=n+1.$ Let $\mathcal T$ is the set of all injective maps 
$\mu:\{ 0,1,\dots,n\} \ax \{1,\dots,q\}$. Then we have
\begin{align*}
\int_{0}^{2\pi}\max_{K}\sum_{l \in K}&\log \dfrac {\|f(re^{i\theta})\|}{|(a_j,f)(re^{i\theta})|} \dfrac{d\theta}{2\pi} \\
&= \int_{0}^{2\pi}\max_{\mu \in \mathcal  T}\sum_{l=0}^n\log \dfrac {\|f(re^{i\theta})\|}{|(a_{\mu(l)},f)(re^{i\theta})|} 
\dfrac{d\theta}{2\pi}\\
&= \int_{0}^{2\pi}\log \bigg\{\max_{\mu \in \mathcal  T} \dfrac {\|f(re^{i\theta})\|^{n+1}}{\prod\limits_{l=0}^n|(a_{\mu(l)},f)
(re^{i\theta})|}\bigg\} \dfrac{d\theta}{2\pi} +O(1)\\
&\leqslant \int_{0}^{2\pi}\log \sum_{\mu \in  \mathcal  T}\dfrac {\|f(re^{i\theta})\|^{n+1}}{\prod\limits_{l=0}^n|(a_{\mu(l)},f)
(re^{i\theta})|} \dfrac{d\theta}{2\pi}+O(1)\\
&= \int_{0}^{2\pi}\log \sum_{\mu \in \mathcal  T}\dfrac {|W((a_{\mu(0)},f),\dots,(a_{\mu(n)},f))(re^{i\theta})|}
{\prod\limits_{l=0}^n|(a_{\mu(l)},f)(re^{i\theta})|} \dfrac{d\theta}{2\pi}\\
& \quad+\int_{0}^{2\pi}\log \sum_{\mu \in \mathcal  T}\dfrac{\|f(re^{i\theta})\|^{n+1}} {|W((a_{\mu(0)},f),
\dots,(a_{\mu(n)},f))(re^{i\theta})|} \dfrac{d\theta}{2\pi}+O(1).
\end{align*}
By the property of Wronskian, we see that
$$|W((a_{\mu(0)},f),\dots,(a_{\mu(n)},f))| =C|W(f_{0},\dots ,f_{n})|,$$
where $C \ne 0$ is a constant. So we obtain
\begin{align}\label{ct3.2}
\int_{0}^{2\pi}&\max_{K}\sum_{l \in K}\log \dfrac {\|f(re^{i\theta})\|}{|(a_l,f)(re^{i\theta})|} \dfrac{d\theta}{2\pi} \\
&\leqslant \int_{0}^{2\pi}\log \sum_{\mu \in T}\dfrac {|W((a_{\mu(0)},f),\dots,(a_{\mu(n)},f))(re^{i\theta})|}
{\prod\limits_{l=0}^n|(a_{\mu(l)},f)(re^{i\theta})|} \dfrac{d\theta}{2\pi}\notag \\
& \qquad+\int_{0}^{2\pi}\log \dfrac{\|f(re^{i\theta})\|^{n+1}} {|W(f_0,\dots,f_n)(re^{i\theta})|} \dfrac{d\theta}{2\pi}+O(1).\notag
\end{align}
Take
$$ g_{\mu(j)}= \dfrac{(a_{\mu(j)},f)}{(a_{\mu(0)},f)}, j=1, \dots, n.$$
From property of Wronskian (see \cite{IL}, Proposition 1.4.3), we have
\begin{align}\label{ct3a2}
\dfrac {W((a_{\mu(0)},f),\dots,(a_{\mu(n)},f))}{\prod\limits_{j=0}^n(a_{\mu(j)},f)} 
&= \dfrac{W(1, \dfrac{(a_{\mu(1)},f)}{(a_{\mu(0)},f)}, \dots, \dfrac{(a_{\mu(n)},f)}{(a_{\mu(0)},f)})}
{\dfrac{(a_{\mu(1)},f)}{(a_{\mu(0)},f)}\dots \dfrac{(a_{\mu(n)},f)}{(a_{\mu(0)},f)} }\notag\\
&=\left|\begin{array}{cccc}
1& 1&\dots & 1\\
0& \dfrac{g'_{\mu(1)}}{ g_{\mu(1)}}&\dots & \dfrac{g'_{\mu(n)}}{ g_{\mu(n)}}\\
\vdots&\vdots&\ddots&\vdots\\
0 & \dfrac{g^{(n)}_{\mu(1)}}{ g_{\mu(1)}}&\dots & \dfrac{g^{(n)}_{\mu(n)}}{ g_{\mu(n)}}
\end{array}
\right|.
\end{align}

We see
\begin{align}\label{ct3.3}
\| \quad m\bigg(r,\dfrac{g_{\mu(j)}^{(k)}}{g_{\mu(j)}}\bigg)&\leqslant 
m_0\bigg(r,\dfrac{g_{\mu(j)}^{(k)}}{g_{\mu(j)}}\bigg) =m_0\bigg(r,\dfrac{g_{\mu(j)}^{(k)}}{g_{\mu(j)}^{(k-1)}}\dfrac{g_{\mu(j)}^{(k-1)}}
{g_{\mu(j)}^{(k-2)}}
\dots \dfrac{g_{\mu(j)}'}{g_{\mu(j)}}\bigg)\notag\\
&\le \sum_{\nu=1}^{k}m_0\bigg(r,\dfrac{g_{\mu(j)}^{(\nu)}}{g_{\mu(j)}^{(\nu-1)}}\bigg).
\end{align}

By Lemma \ref{lm22}, we have
\begin{align}\label{ct3.3a}
 m_0(r, \dfrac{g_{\mu(j)}^{'}}{g_{\mu(j)}}) =O(\log r+\log T_0(r,g_{\mu(j)}).
\end{align}
From the definition of $T_0(r, g_{\mu(j)}')$, $N_0(r, g_{\mu(j)}')$ and (\ref{ct3.3a}), we obtain
\begin{align}\label{ct3.4a}
T_0(r, g_{\mu(j)}')&=m_0(r,g_{\mu(j)}' )+N_0(r, g_{\mu(j)}')\\
&=m_0(r,\dfrac{g_{\mu(j)}'}{g_{\mu(j)}}.g_{\mu(j)})+N_0(r, g_{\mu(j)}')\notag\\
&\le m_0(r, g_{\mu(j)})+N_0(r, g_{\mu(j)})+\overline N_0(r, g_{\mu(j)})+O(\log r+\log T_0(r,g_{\mu(j)})\notag\\
&=2T_0(r, g_{\mu(j)})+O(\log r+\log T_0(r,g_{\mu(j)}).\notag
\end{align}
Similarly, again using Lemma \ref{lm22} and (\ref{ct3.4a}), we have
\begin{align}\label{ct3.4b}
T_0(r, g_{\mu(j)}'')&=m_0(r,g_{\mu(j)}'')+N_0(r,g_{\mu(j)}'')\\
&=m_0(r,\dfrac{g_{\mu(j)}''}{g_{\mu(j)}'}.g_{\mu(j)}')+N_0(r,g_{\mu(j)}'')\notag\\
&\le m_0(r, g_{\mu(j)}')+N_0(r,g_{\mu(j)})+2\overline N_0(r,g_{\mu(j)})+O(\log r+\log T_0(r, g_{\mu(j)}')\notag\\
&=3T_0(r, g_{\mu(j)})+O(\log r+\log T_0(r,g_{\mu(j)}).\notag
\end{align}
By argument as (\ref{ct3.4b}) and using inductive method,  we obtain that the inequality
\begin{align}\label{ct3.4c}
T_0(r, g_{\mu(j)}^{(\nu)})\le (\nu+1)T_0(r, g_{\mu(j)})+O(\log r+\log T_0(r,g_{\mu(j)})
\end{align}
holds for all $\nu \in \N^{*}.$ Furthemore, by Lemma \ref{lm22}, we also have the equality
\begin{align}\label{ct3.4d}
m_0(r, \dfrac{g_{\mu(j)}^{(\nu+1)}}{g_{\mu(j)}^{(\nu)}})=O(\log r + \log T_0(r, g_{\mu(j)}^{(\nu)})),
\end{align}
holds for all $\nu \in \N.$
Combining (\ref{ct3.3}), (\ref{ct3.4c}) and (\ref{ct3.4d}), we get for any $k\in \{1,\dots,n\}$ and $j \in \{1,\dots,n\}$,

\begin{align}\label{ct3.3aa}
\| \quad m\bigg(r,\dfrac{g_{\mu(j)}^{(k)}}{g_{\mu(j)}}\bigg)&\leqslant O(\log r + \log T_0(r, g_{\mu(j)}).
\end{align}
By the definition of $T_{0}(r,g_{\mu(j)}),T_{f}(r)$, we have
\begin{align}\label{ctm1}
T_{0}(r,g_{\mu(j)}) &=m_{0}(r,g_{\mu(j)})+N_0(r, g_{\mu(j)})\notag\\
&=
\dfrac{1}{2\pi}\int_{0}^{2\pi} \log^{+} \Big| \dfrac{(a_{\mu(j)},f)}{(a_{\mu(0)},f)}\Big| d\theta  +
\dfrac{1}{2\pi}\sum_{j=1}^{m}
\int_{0}^{2\pi} \log^{+} \Big|\dfrac{(a_{\mu(j)},f)}{(a_{\mu(0)},f)}(c_j+\dfrac{1}{r}e^{i\theta})\Big| d\theta\notag\\
&\quad+\dfrac{1}{2\pi}\int_{0}^{2\pi} \log | (a_{\mu(0)},f)| d\theta  +
\dfrac{1}{2\pi}\sum_{j=1}^{m}
\int_{0}^{2\pi} \log |(a_{\mu(0)},f)(c_j+\dfrac{1}{r}e^{i\theta})| d\theta+O(1)\notag\\
&\le \dfrac{1}{2\pi}\int_{0}^{2\pi} \log \Big| \dfrac{(a_{\mu(j)},f)+(a_{\mu(0)},f)}{(a_{\mu(0)},f)}\Big| d\theta\notag\\
&\quad +\dfrac{1}{2\pi}\sum_{j=1}^{m}
\int_{0}^{2\pi} \log  \Big|\dfrac{(a_{\mu(j)},f)+(a_{\mu(0)},f)}{(a_{\mu(0)},f)}(c_j+\dfrac{1}{r}e^{i\theta})\Big| d\theta\notag\\
&\quad+\dfrac{1}{2\pi}\int_{0}^{2\pi} \log | (a_{\mu(0)},f)| d\theta  +
\dfrac{1}{2\pi}\sum_{j=1}^{m}
\int_{0}^{2\pi} \log |(a_{\mu(0)},f)(c_j+\dfrac{1}{r}e^{i\theta})| d\theta+O(1)\notag\\
&\le T_f(r)+O(1).
\end{align}

Hence for any $\mu \in \mathcal  T$, from (\ref{ct3a2}), (\ref{ct3.3aa}) and (\ref{ctm1}), we have 
\begin{align*}
\| \quad \int_{0}^{2\pi}\log^+\dfrac {|W((a_{\mu(0)},f),\dots,(a_{\mu(n)},f))(re^{i\theta})|}{\prod\limits_{l=0}^n|(a_{\mu(l)},f)
(re^{i\theta})|}\dfrac{d\theta}{2\pi} \leqslant O(\log r+\log T_f(r)).
\end{align*}
This implies
\begin{align}\label{ct3.4}
\| \quad \int_{0}^{2\pi} \log &\sum_{\mu \in \mathcal  T} \dfrac {|W((a_{\mu(0)},f),\dots,(a_{\mu(n)},f))
(re^{i\theta})|}{\prod\limits_{l=0}^n|(a_{\mu(l)},f)(re^{i\theta})|}\dfrac{d\theta}{2\pi} \\
&\leqslant \int_{0}^{2\pi} \log^+\sum_{\mu \in \mathcal  T} \dfrac {|W((a_{\mu(0)},f),\dots,(a_{\mu(n)},f))(re^{i\theta})|}
{\prod\limits_{l=0}^n|(a_{\mu(l)},f)(re^{i\theta})|}\dfrac{d\theta}{2\pi} \notag\\
&\leqslant  \sum_{\mu \in \mathcal  T} \int_{0}^{2\pi}  \log^+\dfrac {|W((a_{\mu(0)},f),\dots,(a_{\mu(n)},f))
(re^{i\theta})|}{\prod\limits_{l=0}^n|(a_{\mu(l)},f)(re^{i\theta})|}\dfrac{d\theta}{2\pi} +O(1) \notag\\
& \leqslant O(\log r+\log T_f(r)).\notag
\end{align}
We may obtain the inequality (\ref{ct3.1}) from (\ref{ct3.2}) and (\ref{ct3.4}). Similarly, we get
\begin{align}\label{ct3.5}
\| \quad \int_{0}^{2\pi}\max_{K}&\sum_{l \in K}\log \dfrac {\|f(c_j+\dfrac{1}{r}e^{i\theta})\|}
{|(a_l,f)(c_j+\dfrac{1}{r}e^{i\theta})|} \dfrac{d\theta}{2\pi} 
+ \dfrac{1}{2\pi}\int_{0}^{2\pi}\log |W(f)(c_j+\dfrac{1}{r}e^{i\theta})| d\theta\\
&\leqslant (n+1)\dfrac{1}{2\pi}\int_{0}^{2\pi} \log \| f(c_j+\dfrac{1}{r}e^{i\theta})\| d\theta+O(\log r+\log T_f(r))\notag
\end{align}
holds for any $r \in (r_0, +\infty)$ and for all $j=1, \dots, M$. Combining (\ref{ct3.1}) and (\ref{ct3.5}) we obtain
\begin{align*}
\| \quad \int_{0}^{2\pi}\max_{K}&\sum_{l \in K}\log \dfrac {\|f(re^{i\theta})\|}{|(a_l,f)(re^{i\theta})|} \dfrac{d\theta}{2\pi}
+\sum_{j=1}^{M} \int_{0}^{2\pi}\max_{K}\sum_{l \in K}\log \dfrac {\|f(c_j+\dfrac{1}{r}e^{i\theta})\|}{|(a_l,f)(c_j+\dfrac{1}{r}
e^{i\theta})|} \dfrac{d\theta}{2\pi} \\
&\leqslant (n+1)\bigg(\dfrac{1}{2\pi}\int_{0}^{2\pi} \log \| f(re^{i\theta})\| d\theta + \dfrac{1}{2\pi}\sum_{j=1}^{M}\int_{0}^{2\pi} 
\log \| f(c_j+\dfrac{1}{r}e^{i\theta})\| d\theta\bigg)\\
&- \dfrac{1}{2\pi}\bigg(\int_{0}^{2\pi}\log |W(f)(re^{i\theta})| d\theta+\sum_{j=1}^{M}\int_{0}^{2\pi}\log |W(f)(c_j+\dfrac{1}{r}
e^{i\theta})| d\theta\bigg)\\
&+O(\log r+\log T_f(r)).
\end{align*}
Since 
$$N_{W}(r,0)=\dfrac{1}{2\pi}\int_{0}^{2\pi}\log |W(f)(re^{i\theta})| d\theta+\dfrac{1}{2\pi}\sum_{j=1}^{M}\int_{0}^{2\pi}
\log |W(f)(c_j+\dfrac{1}{r}e^{i\theta})| d\theta+O(1),$$
we have the conclusion of this lemma.
\end{proof}

\begin{lemma}\label{lm32} Let $ f=(f_{0}:\dots:f_{n}) : \Omega \longrightarrow \PP^{n}(\C)$  be a linearly non-degenerate holomorphic curve  and $H_{1},\dots ,H_{q}$ be hyperplanes in $\PP^{n}(\C)$ in general position. Let $a_j$ be the vector associated with $H_j$ for $j=1,\dots,q$. Then
\begin{align*}
\sum_{l=1}^{q}m_{f}(r,H_{l}) \leqslant &\int_{0}^{2\pi}\max_{K}\sum_{l \in K}\log \dfrac {\|f(re^{i\theta})\|}{|(a_l,f)(re^{i\theta})|} \dfrac{d\theta}{2\pi}
\\ &\quad +\sum_{j=1}^{M}\int_{0}^{2\pi}\max_{K}\sum_{l \in K}\log \dfrac {\|f(c_j+\dfrac{1}{r}e^{i\theta})\|}
{|(a_l,f)(c_j+\dfrac{1}{r}e^{i\theta})|} \dfrac{d\theta}{2\pi}+O(1).
\end{align*}
\end{lemma}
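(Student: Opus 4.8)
The plan is to reduce the asserted inequality to a single pointwise estimate on each integration circle, after which integration is routine. Summing the definition of $m_f(r,H_l)$ over $l$, the left-hand side becomes
\[
\dfrac{1}{2\pi}\int_{0}^{2\pi}\sum_{l=1}^{q}\log\dfrac{\|f(re^{i\theta})\|}{|(a_l,f)(re^{i\theta})|}\,d\theta+\dfrac{1}{2\pi}\sum_{j=1}^{M}\int_{0}^{2\pi}\sum_{l=1}^{q}\log\dfrac{\|f(c_j+\dfrac{1}{r}e^{i\theta})\|}{|(a_l,f)(c_j+\dfrac{1}{r}e^{i\theta})|}\,d\theta,
\]
so it suffices to prove that at every point $z$ (with $f(z)$ off all the $H_l$) one has
\[
\sum_{l=1}^{q}\log\dfrac{\|f(z)\|}{|(a_l,f)(z)|}\le \max_{K}\sum_{l\in K}\log\dfrac{\|f(z)\|}{|(a_l,f)(z)|}+O(1),
\]
with a constant independent of $z$, the maximum being over subsets $K$ of $\{1,\dots,q\}$ with $\{a_l:l\in K\}$ linearly independent.

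To prove this pointwise bound I would fix $z$ and relabel the indices so that $|(a_1,f)(z)|\le\cdots\le|(a_q,f)(z)|$. The essential input is general position: since any $n+1$ of the vectors $a_l$ are linearly independent, for each $(n+1)$-element subset $S$ the coefficient map $v\mapsto ((a_l,v))_{l\in S}$ is a linear isomorphism of $\C^{n+1}$, so there is a constant $C>0$ depending only on $H_1,\dots,H_q$ with $\|v\|\le C\max_{l\in S}|(a_l,v)|$ for all $v\in\C^{n+1}$ and all such $S$. Applying this with $S=\{1,\dots,n+1\}$ and $v=f(z)$ gives $\|f(z)\|\le C|(a_{n+1},f)(z)|$, hence $|(a_l,f)(z)|\ge \|f(z)\|/C$ and $\log\dfrac{\|f(z)\|}{|(a_l,f)(z)|}\le \log C$ for every $l\ge n+1$.

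Summing over $l$ then yields $\sum_{l=1}^{q}\log\dfrac{\|f(z)\|}{|(a_l,f)(z)|}\le \sum_{l=1}^{n+1}\log\dfrac{\|f(z)\|}{|(a_l,f)(z)|}+(q-n-1)\log C$. Because $a_1,\dots,a_{n+1}$ are linearly independent by general position, $K=\{1,\dots,n+1\}$ is admissible in the maximum, so the first sum is at most $\max_K\sum_{l\in K}\log\dfrac{\|f(z)\|}{|(a_l,f)(z)|}$; this establishes the pointwise bound with $O(1)=(q-n-1)\log C$. Integrating over each of the circles $z=re^{i\theta}$ and $z=c_j+\dfrac{1}{r}e^{i\theta}$ and summing the $M$ puncture contributions then gives the lemma, the finitely many additive constants collapsing into the stated $O(1)$.

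The main obstacle is precisely this combinatorial step: establishing that the tail terms indexed by $l\ge n+1$ are uniformly $O(1)$. This is where general position is indispensable, since one must extract a single constant $C$ from the invertibility of all $(n+1)\times(n+1)$ coefficient subsystems and check that the lower bound $|(a_l,f)(z)|\ge \|f(z)\|/C$ holds simultaneously for every $z$ in the domain. Once this uniform bound is in hand the remainder is elementary, and the additive constant is harmless because the inequality is only claimed up to $O(1)$; in particular nothing here depends on $f$ having essential singularities at the $c_j$, so the argument applies verbatim on each of the $M+1$ circles.
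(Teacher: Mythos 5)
Your proposal is correct and follows essentially the same route as the paper: both reduce the lemma to the pointwise bound obtained by selecting, via general position, the $n+1$ smallest values $|(a_l,f)(z)|$, inverting the corresponding $(n+1)\times(n+1)$ linear system to get a uniform constant $C$ with $\|f(z)\|\le C\max_l|(a_l,f)(z)|$, and absorbing the remaining $q-n-1$ factors into $C^{q-n-1}=O(1)$ before integrating over the $M+1$ circles. The only difference is notational (you relabel indices and work with sums of logarithms, while the paper ranges over injective maps $\mu$ and products), so no further comment is needed.
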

\begin{proof} Let $a_{l} = (a^{(l)}_0,\dots,a^{(l)}_n)$  be the associated vector of $H_{l}$, $1\leqslant l \leqslant q,$ and let 
$\mathcal T$ be the set of all injective maps $\mu:\{ 0,1,\dots,n\} \longrightarrow \{1,\dots ,q\}$. By hypothesis 
$ H_{1},\dots, H_{q}$ are in general position for any $\mu \in \mathcal T,$ then the vectors $a_{\mu(0)},\dots,a_{\mu(n)}$ are 
linearly independent.  

Let $\mu \in T$, we have
\begin{align}\label{ct3.10}
(f,a_{\mu(t)})=a^{\mu(t)}_0f_{0}+\dots +a^{\mu(t)}_nf_{n},\  t=0,1,\dots,n.
\end{align}
Solve the system of linear equations (\ref{ct3.10}), we get
$$ f_{t}=b_{0}^{\mu(t)}(a_{0}^{\mu(t)},f)+\dots +b_{n}^{\mu(t)}(a_{n}^{\mu(t)},f), \ t=0,1,\dots,n,$$
where$ \biggl(b_{j}^{\mu(t)}\biggl)_{t,j=0}^{n}$ is the inverse matrix of $ \biggl(a_{j}^{\mu(t)}\biggl)_{t,j=0}^{n}.$ So there is a constant $C_\mu$ satisfying
$$\| f(z)\| \leqslant C_\mu \max_{0\leqslant t \leqslant n}| (a_{\mu(t)},f)(z)|.$$
Set $C = \max\limits_{\mu \in \mathcal T} C_\mu$. Then for any $\mu \in \mathcal T$, we have
$$\| f(z)\| \leqslant C \max_{0\leqslant t \leqslant n}| (a_{\mu(t)},f)(z)|.$$
For any $z\in\Omega$, there exists the mapping $\mu\in \mathcal  T$ such that
$$ 0 <  |(a_{\mu(0)},f)(z)| \leqslant  |(a_{\mu(1)},f)(z) | \leqslant \dots .\leqslant | (a_{\mu(n)},f)(z)| \leqslant
 | (a_{l},f)(z)| ,$$ 
 for $l \notin \{\mu(0),\dots,\mu(n)\}$. Hence
$$\prod_{l=1}^{q}\dfrac {\| f(z)\|}{| (a_{l},f)(z)|} \leqslant  C ^{q-n-1}\max _{\mu\in \mathcal  T}
\prod_{t=0}^{n}\dfrac {\| f(z)\|}{| (a_{\mu(t)},f)(z)|}.$$
We have
\begin{align*}
\sum_{l=1}^{q}&m_{f}(r,H_{l}) \\
&=\sum_{l=1}^{q}\dfrac{1}{2\pi}\int_{0}^{2\pi} \log \dfrac {\|f(re^{i\theta})\|}{|( a_{l},f)(re^{i\theta})|}\,\dfrac{d\theta}{2\pi}
+\sum_{l=1}^{q}\dfrac{1}{2\pi}\sum_{j=1}^{M}\int_{0}^{2\pi} \log \dfrac {\|f(c_j+\dfrac{1}{r}e^{i\theta})\|}
{|( a_{l},f)(c_j+\dfrac{1}{r}e^{i\theta})|}\,\dfrac{d\theta}{2\pi}\\
&=\dfrac{1}{2\pi}\int_{0}^{2\pi} \log \prod_{l=1}^{q}\dfrac {\|f(re^{i\theta})\|}{|( a_{l},f)(re^{i\theta})|}\,\dfrac{d\theta}{2\pi}
+\dfrac{1}{2\pi}\sum_{j=1}^{M}\int_{0}^{2\pi} \log \prod_{l=1}^{q}\dfrac {\|f(c_j+\dfrac{1}{r}e^{i\theta})\|}
{|( a_{l},f)(c_j+\dfrac{1}{r}e^{i\theta})|}\,\dfrac{d\theta}{2\pi}.
\end{align*}
This implies
\begin{align*}
\sum_{l=1}^{q}&m_{f}(r,H_{l}) \\
&\leqslant \dfrac{1}{2\pi}\int_{0}^{2\pi} \log \max _{\mu\in \mathcal  T}\prod_{t=0}^{n}\dfrac {\|f(re^{i\theta})\|}
{|( a_{\mu(t)},f)(re^{i\theta})|}\,\dfrac{d\theta}{2\pi}\\
&\hspace{2cm} + \dfrac{1}{2\pi}\sum_{j=1}^{M}\int_{0}^{2\pi} \log \max _{\mu\in \mathcal  T}\prod_{t=0}^{n}
\dfrac {\|f(c_j+\dfrac{1}{r}e^{i\theta})\|}{|( a_{\mu(t)},f)(c_j+\dfrac{1}{r}e^{i\theta})|}\,\dfrac{d\theta}{2\pi}+O(1) \\
&=\dfrac{1}{2\pi}\int_{0}^{2\pi} \max _{\mu\in T} \log\prod_{t=0}^{n}\dfrac {\|f(re^{i\theta})\|}{|( a_{\mu(t)},f)(re^{i\theta})|}\,\dfrac{d\theta}{2\pi}\\
&\hspace{2cm} + \dfrac{1}{2\pi}\sum_{j=1}^{M}\int_{0}^{2\pi} \max _{\mu\in T}\log\prod_{t=0}^{n}
\dfrac {\|f(c_j+\dfrac{1}{r}e^{i\theta})\|}{|( a_{\mu(t)},f)(c_j+\dfrac{1}{r}e^{i\theta})|}\,\dfrac{d\theta}{2\pi}+O(1) \\
&=\dfrac{1}{2\pi}\int_{0}^{2\pi} \max _{\mu\in T} \sum_{t=0}^{n}\log\dfrac {\|f(re^{i\theta})\|}{|( a_{\mu(t)},f)(re^{i\theta})|}\,\dfrac{d\theta}{2\pi}\\
&\hspace{2cm} + \dfrac{1}{2\pi}\sum_{j=1}^{M}\int_{0}^{2\pi}  \max _{\mu\in T}\sum_{t=0}^{n}
\log\dfrac {\|f(c_j+\dfrac{1}{r}e^{i\theta})\|}{|( a_{\mu(t)},f)(c_j+\dfrac{1}{r}e^{i\theta})|}\,\dfrac{d\theta}{2\pi}+O(1).
\end{align*}
So we obtain
\begin{align*}
\sum_{l=1}^{q}m_{f}(r,H_{l})&\leqslant \int_{0}^{2\pi}\max_{K}\sum_{l \in K}\log \dfrac {\|f(re^{i\theta})\|}
{|(a_l,f)(re^{i\theta})|}\,\dfrac{d\theta}{2\pi}\\
&\hspace{2cm}+ \sum_{j=1}^{M}\int_{0}^{2\pi}\max_{K}\sum_{l \in K}\log \dfrac {\|f(c_j+\dfrac{1}{r}e^{i\theta})\|}
{|( a_l,f)(c_j+\dfrac{1}{r}e^{i\theta})|}\,\dfrac{d\theta}{2\pi}+O(1).
\end{align*}
This is conclusion of the Lemma \ref{lm23}.
\end{proof}

\begin{proof}[Proof of Theorem \ref{Th2}]
By Lemma \ref{lm31} and Lemma \ref{lm32}, we obtain
\begin{align}\label{ct3.7}
\| \quad \sum_{l=1}^{q}m_{f}(r,H_{l}) &\leqslant \int_{0}^{2\pi}\max_{K}\sum_{l \in K}\log \dfrac {\|f(re^{i\theta})\|}
{|(a_l,f)(re^{i\theta})|} \dfrac{d\theta}{2\pi}\\ 
&\qquad +\sum_{j=1}^{M}\int_{0}^{2\pi}\max_{K}\sum_{l \in K}\log \dfrac {\|f(c_j+\dfrac{1}{r}e^{i\theta})\|}
{|(a_l,f)(c_j+\dfrac{1}{r}e^{i\theta})|} \dfrac{d\theta}{2\pi}+O(1) \notag\\
&\leqslant (n+1)T_{f}(r) - N_{W}(r,0) +O(\log r +\log T_{f}(r)).\notag
\end{align}
By Theorem \ref{Th1}, we get that
$$T_{f}(r)=N_{f}(r,H_{j})+m_{f}(r,H_{j})+O(1)$$
for any $j \in\{1,\dots,q\}$. So from (\ref{ct3.7}), we have
\begin{align}\label{ct3.8}
\| \quad (q-n-1)T_{f}(r)\leqslant \sum_{l=1}^{q}N_{f}(r,H_{l})- N_{W}(r,0)+O(\log r + \log T_{f}(r)).
\end{align}
For  $z_0 \in \overline \Omega_{r}$, we may assume that $(a_{l},f)$ vanishes at $z_0$ for $ 1\leqslant l \leqslant q_{1}$, 
$(a_{l},f)$ does not vanish at $z_0$ for $ l> q_{1}$. Hence, there exists a integer $k_{l}$ and nowhere vanishing holomorphic 
function $ g_{l}$ in neighborhood $U$ of $z$ such that
$$(a_{l},f)(z)=(z-z_0)^{k_{l}}g_{l}(z), \text{ for } l=1,\dots ,q,$$
here $k_{l}=0$ for $q_{1} <l \leqslant q$. We may assume that $ k_{l}\geqslant n$ for  $1\leqslant l \leqslant q_{0}$, and 
$1\leqslant k_{l}< n$ for $q_{0}< l \leqslant q_{1}.$ By property of the Wronskian, we have
$$W(f)=C.W((a_{\mu(0)},f),\dots .,(a_{\mu(n)},f))=\prod_{l=1}^{q_{0}}(z-z_0)^{k_{l}-n}h(z),$$
where $h(z)$ is holomorphic function on $U$. Then $W(f)$ is vanishes at $z_0$ with order at least
 $$\sum\limits_{l=1}^{q_{0}} (k_{l}-n)=\sum\limits_{l=1}^{q_{0}} k_{l}-q_{0}n.$$
By the definition of $ N_{f}(r,H),\  N_{W}(r,0)$ and $ N_{f}^{n}(r,H)$, we have
\begin{align*}
\sum_{l=1}^{q}N_{f}(r,H_{j})-N_{W}(r,0) &= \sum_{l=1}^{q}N_{f}(r,H_{l})-N_{0}(r,\dfrac{1}{W})+O(1) \\
&\leqslant \sum_{l=1}^{q}N^{n}_{f}(r,H_{l}) +O(1).
\end{align*}
So from (\ref{ct3.8}), we have
$$\| \quad (q-n-1)T_{f}(r)\leqslant \sum_{l=1}^{q}N^{n}_{f}(r,H_{l})+O(\log r+\log T_{f}(r)).$$
The proof of Theorem \ref{Th2} is completed.
\end{proof}
%\end{proof}

\begin{proof}[Proof of Theorem \ref{Th3}]

Let $D_1, \dots, D_q$ be the hypersurfaces in $\PP^{N}(\C)$ which are located in general position on $V.$ Let $Q_l, 1\le l\le q$ be the homogeneous
polynomials in $\C[x_0, \dots, x_N]$ of degree $d_l$ defining on $D_l.$ We can replace $Q_l$ by $Q_l^{d/d_l},$ where $d$ is the
$l.c.m$ of $d_l,$ $l=1, \dots, q,$ we may assume that $Q_1, \dots, Q_q$ have the same degree of $d.$ 

Given $z\in \Omega$ there exists a renumbering $\{i_0,\dots, i_{n}\}$ of the indices $\{1,\dots , q \}$
such that
\begin{align}\label{cta1}
0<|Q_{i_0}\circ (f (z ))| \le  |Q_{i_2}\circ (f(z))| \le  \dots \le |Q_{i_{n}}\circ (f(z))|\le \min_{l\not \in\{i_0, \dots, i_n\}}
|Q_{l}\circ (f(z))|.
\end{align}
Suppose that $P_1, \dots, P_s$  is a base of algebraic variety $V.$ From the hypothesis,
$D_1, \dots, D_q$ are hypersurfaces in $\PP^{N}(\C)$ which are located in general position on $V,$ we have for every subset
$\{i_0, \dots, i_n\} \subset \{1, \dots, q\},$
$$ V\cap \text{Supp}D_{i_{0}} \cap \dots \cap \text{Supp}D_{i_n}= \emptyset.$$
This implies 
$$ P_1\cap\dots \cap P_s\cap \text{Supp}D_{i_{0}} \cap \dots \cap \text{Supp}D_{i_n}= \emptyset.$$
Thus by Hilbert’s Nullstellensatz \cite{VW}, for
any integer k, $0 \le k \le N,$ there is an integer $m_k > \{d, \max_{t=1}^{s}\{\deg P_t\}\}$ such that
$$ x_k^{m_k}=\sum_{j=0}^{n}b_{k_j}(x_0, \dots, x_N) Q_{i_j}(x_0, \dots, x_N)
+\sum_{t=1}^{s}b_t(x_0, \dots, x_N)P_t(x_0, \dots, x_N),$$
where $b_{k_j}$ are homogeneous forms with coefficients in $\mathbb C$ of degree $m_k-d$ and $b_t$ 
are homogeneous forms with coefficients in $\mathbb C$ of degree $m_k-\deg P_t,$ $t=1, \dots, s.$ So from 
$f:\Omega \to V,$ we have
$$ \sum_{t=1}^{s}b_t(f_0(z), \dots, f_N(z))P_t(f_0(z), \dots, f_N(z))=0.$$
This implies
$$ 
|f_k(z )|^{m_k}\le c_1||f (z )||^{m_k-d}\max\{|Q_{i_0}\circ (f (z ))|, \dots, |Q_{i_n}\circ (f (z ))|\},$$
where $c_1$ is a positive constant depends only on the coefficients of $b_{k_j}, 0\le j\le n, 0\le k\le N,$ thus depends only on the coefficients of $Q_l, 1\le l\le q.$ Therefore,
\begin{align}\label{cta2}
||f(z )||^{d}\le c_1\max\{|Q_{i_0}\circ (f (z ))|, \dots, |Q_{i_n}\circ (f (z ))|\}.
\end{align}
By (\ref{cta1}) and (\ref{cta2}), we get
$$ \prod_{l=1}^{q}\dfrac{||f(z)||^{d}||Q_l||}{|Q_l(f(z))|}\le C
\prod_{k=0}^{n}\dfrac{||f(z)||^{d}||Q_{i_k}||}{|Q_{i_k}(f(z))|},$$
where $C=c_1^{q-n-1}\prod_{l\not\in \{i_0, \dots, i_n\}}||Q_l||$ and $||Q_l||$ is the maximum of
the absolute values of the coefficients of $Q_l.$ Thus, we have
\begin{align*}
\sum_{l=1}^{q}m_f(r, D_l)&=\int_{0}^{2\pi}\sum_{l=1}^{q}\log \dfrac{||f(re^{i\theta})||^{d}}{|Q(f)(re^{i\theta})|}
\dfrac{d\theta}{2\pi}+\sum_{j=1}^{M}\int_{0}^{2\pi}\sum_{l=1}^{q}\log \dfrac{||f(c_j+\dfrac{1}{r}e^{i\theta})||^{d}}
{|Q(f)(c_j+\dfrac{1}{r}e^{i\theta})|}\dfrac{d\theta}{2\pi}\\
&=\int_{0}^{2\pi}\log \prod_{l=1}^{q}\dfrac{||f(re^{i\theta})||^{d}}{|Q(f)(re^{i\theta})|}
\dfrac{d\theta}{2\pi}+\sum_{j=1}^{M}\int_{0}^{2\pi}\log \prod_{l=1}^{q}\dfrac{||f(c_j+\dfrac{1}{r}e^{i\theta})||^{d}}
{|Q(f)(c_j+\dfrac{1}{r}e^{i\theta})|}\dfrac{d\theta}{2\pi}.
\end{align*}
Hence, we get
\begin{align}\label{ct42}
\sum_{l=1}^{q}m_f(r, D_l)&\le \int_{0}^{2\pi}\max_{\{i_0, \dots, i_n\}}\Big\{\log \prod_{k=0}^{n}
\dfrac{||f(re^{i\theta})||^{d}}{|Q_{i_k}(f)(re^{i\theta})|}\Big\}\dfrac{d\theta}{2\pi}\notag\\
&+\sum_{j=1}^{M}\int_{0}^{2\pi}\max_{\{i_0, \dots, i_n\}}\Big\{\log \prod_{k=0}^{n}
\dfrac{||f(c_j+\dfrac{1}{r}e^{i\theta})||^{d}}{|Q_{i_k}(f)(c_j+\dfrac{1}{r}e^{i\theta})|}\Big\}\dfrac{d\theta}{2\pi}+O(1)
\end{align}
By argument as M. Ru \cite{Ru3}, we consider the map 
$$\psi: x\in V \mapsto [Q_1(x):\dots : Q_q(x)] \in \PP^{q-1}(\C).$$
Put $Y=\psi(V).$ The hypothesis {\it in general position} implies that $\psi$ is a finite morphimsm on $V$ and $Y$ is a complex
projective subvarieties of $\PP^{q-1}(\C)$ and $\dim Y=n,$ $\deg Y:=\Delta\le d^n \deg V.$ For each 
$a=(a_1, \dots, a_q)\in \Z_{\ge 0}^{q},$ denote by $y^{a}=y_1^{a_1}\dots y_q^{a_q}.$ Let $m$ be a positive integer, we consider
the vector space $V_m=\C[y_1, \dots, y_q]_m/ (I_Y)_m,$
where $I_Y$ is the prime ideal which is definied algebraic variety $Y,$ $(I_Y)_m:=\C[y_1, \dots, y_q]_m\cap I_Y.$ Fix a basis
$\{\phi_0, \dots, \phi_{n_m}\}$ of $V_m$, where $n_m+1=H_Y(m)=\dim V_m.$ Set,
$$ F=[\phi_0(\psi\circ f):\dots:\phi_{n_m}(\psi\circ f)]: \Omega \to \PP^{n_m}(\C).$$
Note that, $f$ is algebraically  non-degenerate, then $F$ is also. For any $c\in \R_{\ge 0}^{q}$, the Hilbert function of $Y$ with
respect to the weight $c$ is definied by
$$ S_Y(m, c)=\max \sum_{i=1}^{H_Y(m)}a_i.c,$$
where the maximum is taken over all sets of monomials $y^{a_1}, \dots, y^{a_{H_Y(m)}}$ whose 
$y^{a_1}+(I_Y)_m, \dots, y^{a_{H_Y(m)}}+(I_Y)_m$ is a basis of $\C[y_1, \dots, y_q]_m/(I_Y)_m.$ For every $z\in \Omega,$ denote $c_z=(c_{1, z}, \dots, c_{q, z})$, where $c_{l, z}=\log \dfrac{||f(z)||^{d}||Q_l||}{|Q_l(f(z))|},$ $l=1, \dots, q.$ We see that 
$c_z\in \R_{\ge 0}^{q},$ for all $z\in \Omega.$ There exists a subset  $I_z\subset \{0, \dots, q_m\}, q_m=C_{q+m-1}^{m}-1,
  |I_z|=n_m+1=H_Y(m)$ which $\{y^{a_i}: i\in I_z\}$ is a basis of $\C[y_1, \dots, y_q]_m/(I_Y)_m$ 
(residue classes modulo $(I_Y)_m$) and
$$ S_Y(m, c_z)=\max \sum_{i=1}^{H_Y(m)}a_i.c_z.$$
  From two basis $\{y^{a_i}: i\in I_z\}$ and $\{\phi_0, \dots, \phi_{n_m}\},$ there exist the 
forms independent linearly $\{L_{l, z}, l\in I_z\}$ such that 
$$y^{a_l}=L_{l, z}(\phi_0, \dots, \phi_{n_m}).$$
We denote $J$ by the set of indices of the linear forms $L_{l, z}.$  We see
$$ \log \prod_{i\in J}\dfrac{1}{|L_i(F)(z)|}=\log \prod_{i\in J}\dfrac{1}{|Q_1(f)(z)|^{a_{i, 1}}\dots |Q_q(f)(z)|^{a_{i, q}}}.$$
This implies
\begin{align}\label{ct43}
\max_{J} \log \prod_{i\in J}\dfrac{||F(z)||}{|L_i(F)(z)|}&\ge S_Y(m, c_z)-dmH_Y(m) \log ||f(z)||\notag\\
&+(n_m+1)\log ||F(z)||.
\end{align}
By Lemma \ref{lm23}, we have
\begin{align}\label{ct44}
S_Y(m, c_z)\ge \dfrac{mH_Y(m)}{(n+1)\Delta}e_Y(c_z)-H_Y(m)(2n+1)\Delta .\max_{1\le i\le q}c_{i, z}.
\end{align}
From Lemma \ref{lm24} and $D_1, \dots, D_q$ are in general position on $V$, for any
 $\{i_0, \dots, i_n\}\subset \{1, \dots, q\}$, we have
\begin{align}\label{ct45}
E_Y(c_z)\ge (c_{i_0, z}+\dots+c_{i_n, z})\Delta.
\end{align}
Using the definition of $c_z$, we obtain
\begin{align}\label{ct46}
c_{i_0, z}+\dots+c_{i_n, z}=\log\Big(\dfrac{||f(z)||^{d}||Q_{i_0}||}{|Q_{i_0}(f)(z)|}\dots 
 \dfrac{||f(z)||^{d}||Q_{i_n}||}{|Q_{i_n}(f)(z)|}\Big).
\end{align}
From (\ref{ct43}) to (\ref{ct46}), we have
\begin{align}\label{ct47}
\log\Big(&\dfrac{||f(z)||^{d}||Q_{i_0}||}{|Q_{i_0}(f)(z)|}\dots  \dfrac{||f(z)||^{d}||Q_{i_n}||}{|Q_{i_n}(f)(z)|}\Big)\notag\\
&\le \dfrac{(n+1)}{mH_Y(m)}\Big(\max_{J} \log \prod_{l\in J}\dfrac{||F(z)||}{|L_l(F)(z)|}-(n_m+1)\log ||F(z)||\Big)\notag\\
&\quad +d(n+1)\log ||f(z)|| +\dfrac{(2n+1)(n+1)\Delta}{m}\max_{1\le i\le q}c_{i, z}\notag\\
&=\dfrac{(n+1)}{mH_Y(m)}\Big(\max_{J} \log \prod_{l\in J}\dfrac{||F(z)||}{|L_l(F)(z)|}-(n_m+1)\log ||F(z)||\Big)\notag\\
&\quad +d(n+1)\log ||f(z)|| \notag\\
&\quad +\dfrac{(2n+1)(n+1)\Delta}{m}\Big(\max_{1\le l\le q}\log \dfrac{||f(z)||^{d}||Q_l||}{|Q_l(f)(z)|}\Big).
\end{align}
Take the integration of both sides of (\ref{ct47}), we have
\begin{align*}
\int_{0}^{2\pi} &\max_{\{i_0, \dots, i_n\}}\log \Big(
\dfrac{||f(re^{i\theta})||^{d}||Q_{i_0}||}{|Q_{i_0}(f)(re^{i\theta})|}\dots  
\dfrac{||f(re^{i\theta})||^{d}||Q_{i_n}||}{|Q_{i_n}(f)(re^{i\theta})|} \Big)\dfrac{d\theta}{2\pi}\\
&+\sum_{j=1}^{M}\int_{0}^{2\pi} \max_{\{i_0, \dots, i_n\}}\log \Big(
\dfrac{||f(c_j+\dfrac{1}{r}e^{i\theta})||^{d}||Q_{i_0}||}{|Q_{i_0}(f)(c_j+\dfrac{1}{r}e^{i\theta})|}\dots  
\dfrac{||f(c_j+\dfrac{1}{r}e^{i\theta})||^{d}||Q_{i_n}||}{|Q_{i_n}(f)(c_j+\dfrac{1}{r}e^{i\theta})|} \Big)
\dfrac{d\theta}{2\pi}\\
&\le \dfrac{(n+1)}{mH_Y(m)}\Big(\int_{0}^{2\pi}\max_{J} \log \prod_{l\in J}
\dfrac{||F(re^{i\theta})||}{|L_l(F)(re^{i\theta})|}\dfrac{d\theta}{2\pi}\Big)\\
&+\sum_{j=1}^{M}\dfrac{(n+1)}{mH_Y(m)}\Big(\int_{0}^{2\pi}\max_{J} \log \prod_{l\in J}
\dfrac{||F(c_j+\dfrac{1}{r}e^{i\theta})||}{|L_l(F)(c_j+\dfrac{1}{r}e^{i\theta})|}\dfrac{d\theta}{2\pi}
-(n_m+1)T_F(r)\Big)\\
&+d(n+1)T_f(r)+\dfrac{(2n+1)(n+1)\Delta}{m}\sum_{1\le l\le q}m_f(r, D_l)
\end{align*}
This implies
\begin{align}\label{ct48}
\int_{0}^{2\pi} &\max_{\{i_0, \dots, i_n\}}\log \Big(
\dfrac{||f(re^{i\theta})||^{d}}{|Q_{i_0}(f)(re^{i\theta})|}\dots  
\dfrac{||f(re^{i\theta})||^{d}}{|Q_{i_n}(f)(re^{i\theta})|} \Big)\dfrac{d\theta}{2\pi}\notag\\
&+\sum_{j=1}^{M}\int_{0}^{2\pi} \max_{\{i_0, \dots, i_n\}}\log \Big(
\dfrac{||f(c_j+\dfrac{1}{r}e^{i\theta})||^{d}}{|Q_{i_0}(f)(c_j+\dfrac{1}{r}e^{i\theta})|}\dots  
\dfrac{||f(c_j+\dfrac{1}{r}e^{i\theta})||^{d}}{|Q_{i_n}(f)(c_j+\dfrac{1}{r}e^{i\theta})|} \Big)
\dfrac{d\theta}{2\pi}\notag\\
&\le \dfrac{(n+1)}{mH_Y(m)}\Big(\int_{0}^{2\pi}\max_{J} \log \prod_{l\in J}
\dfrac{||F(re^{i\theta})||}{|L_l(F)(re^{i\theta})|}\dfrac{d\theta}{2\pi}\Big)\notag\\
&+\sum_{j=1}^{M}\dfrac{(n+1)}{mH_Y(m)}\Big(\int_{0}^{2\pi}\max_{J} \log \prod_{l\in J}
\dfrac{||F(c_j+\dfrac{1}{r}e^{i\theta})||}{|L_l(F)(c_j+\dfrac{1}{r}e^{i\theta})|}\dfrac{d\theta}{2\pi}\Big)\notag\\
&-\dfrac{(n+1)}{mH_Y(m)}(n_m+1)T_F(r)+d(n+1)T_f(r)\notag\\
&+\dfrac{(2n+1)(n+1)\Delta}{m}\sum_{1\le l\le q}m_f(r, D_l)+O(1).
\end{align}
Next, apply to Lemma \ref{lm31} for $F$ and collection of hyperplanes $L_l, l\in J,$ for every $\varepsilon>0$ and $m$ is large enough, 
we obtain
\begin{align}\label{ct49}
\|  \quad&\dfrac{(n+1)}{mH_Y(m)}\Big(\int_{0}^{2\pi}\max_{J} \log \prod_{l\in J}
\dfrac{||F(re^{i\theta})||}{|L_l(F)(re^{i\theta})|}\dfrac{d\theta}{2\pi}\Big)\notag\\
&+\sum_{j=1}^{M}\int_{0}^{2\pi}\max_{J} \log \prod_{l\in J}
\dfrac{||F(c_j+\dfrac{1}{r}e^{i\theta})||}{|L_l(F)(c_j+\dfrac{1}{r}e^{i\theta})|}\dfrac{d\theta}{2\pi}-(n_m+1)T_F(r)\Big)\notag\\
&\le -\dfrac{n+1}{mH_Y(m)}N_W(r, 0)+\dfrac{\varepsilon}{3m}T_F(r)+O(\log r+\log T_F(r)),
\end{align}
 where $N_W(r, 0)$  is denoted by the Wronskian of $F.$  Combining (\ref{ct42}), (\ref{ct48}) and (\ref{ct49}), we get
\begin{align}\label{ct50}
\|  \quad \sum_{l=1}^{q}m_f(r, D_l)&\le -\dfrac{n+1}{mH_Y(m)}N_W(r, 0)+\dfrac{\varepsilon}{3m}T_F(r)+d(n+1)T_f(r)\notag\\
&\quad+\dfrac{(2n+1)(n+1)\Delta}{m}\sum_{1\le l\le q}m_f(r, D_l)\notag\\
&\quad+O(\log r+\log T_{F}(r)).
\end{align}
Using the Theorem \ref{Th1}, we see $T_F(r)\le dmT_f(r)+O(1).$ Thus, (\ref{ct50}) implies
\begin{align}\label{ct51}
\|  \quad \sum_{l=1}^{q}d(q-(n+1)-\varepsilon/3)T_f(r)&\le \sum_{l=1}^{q}N_f(r, D_l) -\dfrac{n+1}{mH_Y(m)}N_W(r, 0)\notag\\
&\quad+\dfrac{(2n+1)(n+1)\Delta}{m}\sum_{1\le l\le q}m_f(r, D_l)\notag\\
&\quad+O(\log r+\log T_f(r)).
\end{align}
By an argument method in \cite{Ru4}, we conclude 
\begin{align}\label{ct52}
 \dfrac{n+1}{mH_Y(m)}\sum_{l=1}^{q}N_f(r, D_l) -N_W(r, 0)&\le  \dfrac{n+1}{mH_Y(m)}\sum_{l=1}^{q}N_f^{n_m}(r, D_l)\notag\\
&+(2n+1)\Delta H_Y(m)\sum_{l=1}^{q}N_f(r, D_l).
\end{align}
Combining (\ref{ct51}) and (\ref{ct52}), we have
\begin{align}\label{ct53a}
\|  \quad d(q-(n+1)-\varepsilon/3)T_f(r)&\le \sum_{l=1}^{q}N_f^{n_m}(r, D_l)
+\dfrac{(2n+1)(n+1)\Delta}{m}\sum_{l=1}^{q}N_f(r, D_l)\notag\\
&\quad +\dfrac{(2n+1)(n+1)\Delta}{m}\sum_{l=1}^{q}m_f(r, D_l)\notag\\
&\le \sum_{l=1}^{q}N_f^{n_m}(r, D_l)+\dfrac{(2n+1)(n+1)dq\Delta}{m}T_f(r)\notag\\
&\quad+O(\log r+\log T_f(r)).
\end{align}
We choose the $m$ sufficiently large such that 
\begin{align}\label{ct53}
\dfrac{(2n+1)(n+1)\Delta}{m}<\varepsilon/3.
\end{align}
We may choose $m=18n^2\Delta I(\varepsilon^{-1})$ for the inequality (\ref{ct53}), where $I(x):=\min\{k\in \N: k>x\}$ for each positive
constant $x.$  Thus, from (\ref{ct53a}) and (\ref{ct53}), we get the inequality
\begin{align*}
\|  \quad d(q(1-\varepsilon/3)-(n+1)-\varepsilon/3)T_f(r)&\le \sum_{l=1}^{q}N_f^{n_m}(r, D_l)\\
&+O(\log r+\log T_f(r)).
\end{align*}

By property $\deg Y= \Delta \le d^n \deg V,$ where $d=lcm \{d_1, \dots, d_q\}$, $\deg Y =n$ and  $n_m\le \Delta C_{m+n}^{n},$ we 
have
\begin{align*}
n_m&\le \Delta \dfrac{(m+1)(m+2)\dots (m+n)}{n!}\\
&<\Delta \Big(\dfrac{m+n}{n}\Big)^{n}\dfrac{n^n}{n!}\\
&=\Delta \Big(1+\dfrac{m}{n}\Big)^{n}\dfrac{n^n}{n!}.
\end{align*}
For the choice of $m,$ we have
$$ n_m\le \dfrac{n^nd^{n^2+n}(19nI(\varepsilon^{-1}))^n.(\deg V)^{n+1}}{n!}.$$
\end{proof}

\begin{proof}[Proof of Theorem \ref{Th10}]

Let ${\large \text{f}}= (f_0:\dots: f_N)$ be a reduced representation of $f,$ where $f_0, \dots, f_N$ are entire functions on 
$\Omega$ and have no common zeros. We consider the function 
$\phi_i=Q_i\circ {\large \text{f}}=Q_i(f_0, \dots, f_N), 0\le i\le N.$ Let 
 $F=(\phi_0f_0^n:\dots: \phi_Nf_N^n).$ Since the hypersurfaces  $\{\mathcal H_i^nQ_i=0\}, 0\le i\le N,$ are 
located in general position 
in $\mathbb P^{N}(\mathbb C),$ then $F: \Omega \to \mathbb P^N(\mathbb C)$ is a holomorphic curve. Let 
 $\mathfrak H_i, 0\le i\le N,$ be the hypersurface defined by $\{\mathcal H_i^nQ_i=0\}, 0\le i\le N.$
From the hypothesis  $\mathfrak H_0, \dots, \mathfrak H_N$ are in general position, i.e.
$$ \text{supp}\mathfrak H_{0} \cap \dots \cap \text{supp}\mathfrak H_N= \emptyset.$$
Thus by Hilbert's Nullstellensatz \cite{VW}, for
any integer k, $0 \le k \le N,$ there is an integer $m_k > n+d$ such that
$$ x_k^{m_k}=\sum_{i=0}^{N}b_{i}(x_0, \dots, x_N)  \mathcal H_i^n(x_0, \dots, x_N)Q_{i}(x_0, \dots, x_N),$$
where $b_{0}, \dots, b_N$ are homogeneous forms with coefficients in $\mathbb C$ of degree $m_k-(n+d).$  
This implies
$$ 
|f_k(z )|^{m_k}\le c_1||{\large \text{f}}(z )||^{m_k-(n+d)}\max\{|\mathcal H_0^nQ_{0}({\large \text{f}}(z ))|, \dots, |\mathcal H_N^nQ_{N}
({\large \text{f}}(z ))|\},$$
where $c_1$ is a positive constant depending only on the coefficients of $b_{i}, 0\le i\le N, 0\le k\le N,$ thus depending only on the coefficients of $Q_i, 0\le i\le N.$ Therefore,
\begin{align}\label{cta2}
||{\large \text{f}}(z )||^{n+d}\le c_1\max\{|\mathcal H_0^nQ_{0}({\large \text{f}}(z ))|, \dots, |\mathcal H_N^nQ_{N}({\large \text{f}}(z ))|\}.
\end{align}

From (\ref{cta2}) and the First Main Theorem, we have
\begin{align}\label{41}
 T_{F}(r)&\ge (n+d)T_{f}(r)+O(1)\notag\\
&\ge (n+d-(N+1)d)T_{f}(r)+\sum_{i=0}^{N}N_{f}(r, D_i)+O(1)\notag\\
&= (n-Nd) T_{f}(r)+\sum_{i=0}^{N}N_{f}(r, D_i)+O(1).
\end{align}
On the other hand, by applying Theorem \ref{Th2} to $F$, and the hyperplanes 
$$ H_i=\{y_i=0\}, 0\le i\le N,$$
and
$$ H_{N+1}=\{y_0+\dots+y_{N}=0\} $$
yields
\begin{align}\label{42}
\| T_{F}(r)\le \sum_{i=0}^{N+1} N_{F}^{N}(r, H_i)+o( T_{f}(r)).
\end{align}
We have
\begin{align*}
N_{F}^{N}(r, H_i)\le  N_{f}^{N}(r, D_i)+ N^{N}(r, \dfrac{1}{f_i^n})
\end{align*}
for all $i=0, \dots, N,$ where $ N^N(r, \dfrac{1}{g})$ is counting function with level of truncation $N$ of $g.$ Hence
\begin{align}\label{43}
 N_{ F}^{N}(r, H_i)&\le N_{f}^{N}(r, D_i)+N\overline { N}
(r, \dfrac{1}{f_i^n})\notag\\
&\le  N_{f}^{N}(r, D_i)+N T_{f}(r)+O(1)
\end{align}

for all $i=0, \dots, N.$ Also note $N_{F}^{N}(r, H_{N+1})= N_{ f}^{N}(r, D).$ 
By combining (\ref{41}) to  (\ref{43}), we obtain 
\begin{align*}
\|(n-(d+N+1)N) T_{ f}(r)&+\sum_{i=0}^{N}( N_{ f}(r, D_i)-
N_{f}^{N}(r, D_i))\\
&\le 
N_{f}^{N}(r, D)+o( T_{ f}(r)).
\end{align*}
\end{proof}

\begin{proof}[Proof of Theorem \ref{Th6}]
We suppose that $f\not\equiv g,$ then there are two numbers $\alpha, \beta \in  \{0, \dots , N\},$ $ \alpha\ne \beta$ such that
$f_{\alpha}g_{\beta}\not\equiv f_{\beta}g_{\alpha}.$ Assume that $z_0 \in \Omega$ is a zero of $P(f),$  from condition 
$f(z)=g(z)$ when $z\in f^{-1}(D) \cup g^{-1}(D),$ we get $f(z_0)=g(z_0).$ This implies $z_0$ is a zero of $\dfrac{f_{\alpha}}{f_{\beta}}-\dfrac{g_{\alpha}}{g_{\beta}}.$ Therefore, we have
\begin{align*}
 N_f^{N}(r, D) \le NN_f^{1}(r, D)&\le N N_{ \dfrac{f_{\alpha}}{f_{\beta}}-\dfrac{g_{\alpha}}{g_{\beta}}}(r,0 )\\
&\le N(T_f(r)+T_g(r))+O(1).
\end{align*}
Apply to Theorem \ref{Th10}, we obtain
\begin{align}\label{51}
  \|(n-(d+N+1)N)T_f(r)\le N(T_f(r)+T_g(r))+o(T_f(r)).
\end{align}
Similarly, we have
\begin{align}\label{52}
  \|(n-(d+N+1)N)T_g(r)\le N(T_f(r)+T_g(r))+o(T_g(r)).
\end{align}
Combining (\ref{51}) and (\ref{52}), we get
$$ \|(n-(d+N+3)N)(T_f(r)+T_g(r)) \le o(T_f(r))+o(T_g(r)).$$
This is a contradiction with $n>(d+N+3)N.$ Hence $f\equiv g.$
\end{proof}


\begin{thebibliography}{x}

\bibitem{AP} \textsc{T. T. H. An,  H.T. Phuong}, 
\textit{An explicit estimate on multiplicity truncation in the second main theorem for holomorphic curves encountering 
hypersurfaces in general position in projective space},  Houston Journal of Mathematics, Volume 35, N. 3, p. 774-786, 2009

\bibitem{CA} \textsc{H. Cartan,}   \textit{Sur les zeros des combinaisions linearires de $p$ fonctions holomorpes donnees,} Mathematica (Cluj). {\bf 7}, 80-103, 1933.
\bibitem{Ru4} \textsc{Z. Chen, M. Ru and Q. Yan}, {\it  The truncated Second main theorem and uniqueness theorems}, 
Sci. China. Math. {\bf 53}(3), 605-616, 2010.

\bibitem{ZQ} \textsc{Z. Chen and Q. Yan}, {\it  A note on uniqueness problem for meromorphic mapping with $2N+3$ hyperplanes}, Sci. China. Math. {\bf 53}(10), 2657-2663, 2010.

\bibitem{PU} \textsc{P. Corvaja and U. Zannier,} \textit{On a general Thue’s equation,} American Journal of Mathematics. 126(5), 1033-1055 (2004).

\bibitem{DT} \textsc{G. Dethloff and T. V. Tan,}   \textit{ A Uniqueness Theorem for Meromorphic Maps with Moving Hypersurfaces,} Publ. Math. Debrecen. 78, 347-357, 2011.

\bibitem{DT2} \textsc{G. Dethloff and T. V. Tan,}   \textit{Uniqueness Theorems for Meromorphic Mappings with Few Hyperplanes,} Bull. Sci. Math. 133, 501-514, 2009. 

\bibitem{DR} \textsc{M. Dulock and M. Ru}, {\it A uniqueness theorem for holomorphic curves into encountering hypersurfaces in projective space,} Complex Variables and Elliptic Equations. 53, 797-802, 2008.

\bibitem{KH} \textsc{M. O. Hanyak and A. A. Kondratyuk,} \textit{ Meromorphic functions in $m-$ punctured complex planes},
 Matematychni Studii. 27, 53-69, 2007.


\bibitem{KK} \textsc{A. Y. Khrystiyanyn and A. A. Kondratyuk,} \textit{ On the Nevanlinna theory for meromorphic functions on
 Annuli I}, Matematychni Studii. 23, 19-30, 2005.

\bibitem{KK1} \textsc{A. Y. Khrystiyanyn and A. A. Kondratyuk,} \textit{ On the Nevanlinna theory for meromorphic 
functions on Annuli II}, Matematychni Studii. 24, 57-68, 2005.

\bibitem{IL} \textsc{I. Laine,} Nevanlinna Theory and Complex differential equation, W. de Gruyter, Berlin,  1993.

%\bibitem{YO} \textsc{Y. Okuyama,} A Lehto -Virtanen-type theorem and a rescaling principle for an isolated essential 
%singularity of a holomorphic curve in a complex space, \textit{International Journal of Mathematics,} 
%26(2015), no. 6, 1541009 (15 pages).
\bibitem{TP} \textsc{H.T. Phuong and N. V. Thin}, \textit{On fundamental theorems for holomorphic curves on Annuli}, 
Ukrainian Mathematical Journal. 67(7), 1111-1125, 2015.

\bibitem{P} \textsc{H. T. Phuong,} {\it On unique range sets for holomorphic maps sharing hypersurfaces without
counting multiplicity,} Acta. Math. Vietnamica. 34(3), 351-360, 2009.

\bibitem{RM1} \textsc{M. Ru}, Nevanlinna theory and its relation to diophantime approximation, Word Scientific Publishing Co. Pte. Ldt, 2001.

\bibitem{Ru1} \textsc{M. Ru}, {\it A defect relation for holomorphic curves intersecting hypersurfaces},  Amer.  Journal of Math. 
{\bf 126}, 215-226, 2004.


\bibitem{Ru3} \textsc{M. Ru}, {\it  Holomorphic curves into algebraic varieties}, Annals of Mathematics. 
{\bf 169}, 255-267, 2009.

\bibitem{NVT} \textsc{N. V. Thin}, {\it A note on Cartan's second main theorem for holomorphic curve intersecting hypersurface},
 To appear in J. Math. Anal. Appl. Doi: http://www.sciencedirect.com/science/article/pii/S0022247X17302408.

%\bibitem{PS} \textsc{P. Sugrue,} Non-meromorphic solutions to differential equations, Ph. D Dissertation of Northern Illinois
% University De Kalb, Illinois, 2013.

 \bibitem{VW} \textsc{V. D. Waerden,} Algebra, vol 2, 7th, ed., Spinger-Verlag, New York, 1991.

\end{thebibliography}
\end{document}